\begin{document}

\theoremstyle{definition}
\newtheorem{step}{Step}
\newtheorem{definition}{Definition}[section]
\theoremstyle{remark}
\newtheorem{remark}{Remark}[section]
\theoremstyle{plain}
\newtheorem{prop}{Proposition}[section]
\newtheorem{claim}{Claim}[section]
\newtheorem{lemma}{Lemma}[section]
\newtheorem{theorem}{Theorem}[section]
\newtheorem{cor}{Corollary}[section]

\title{Spin$^c$ Prequantization and Symplectic Cutting}
\author{Shay Fuchs}
\maketitle
\begin{abstract}
We define \emph{spin$^c$ prequantization} of a
symplectic manifold to be a spin$^c$ structure
and a connection which are compatible with the
symplectic form. We describe the cutting of an
$S^1$-equivariant spin$^c$ prequantization. The
cutting process involves a choice of a spin$^c$
prequantization for the complex plane. We prove
that the cutting is possible if and only if the
moment map level set along which the cutting is
done is compatible with this choice.
\end{abstract}

\section{Introduction}
Given a compact even-dimensional oriented
Riemannian manifold $M$, endowed with a spin$^c$
structure, one can construct an associated Dirac
operator $D^+$ acting on smooth sections of a
certain (complex) vector bundle over $M$. The
\emph{spin$^c$ quantization} of $M$ with respect
to the above structure is defined to be
$$Q(M)=ker(D^+)-coker(D^+)\ .$$
This is a virtual vector space, and in the
presence of a $G$-action, it is a virtual
representation of the group $G$. Spin$^c$
quantization generalizes the concept of
\emph{K\"{a}hler} and \emph{almost-complex
quantization} (see \cite{CKT}, especially Lemma
2.7 and Remark 2.9) and in some sense it is a
`better behaved' quantization (see \cite{SF1}).

Quantization was originally defined as a process
that associates a Hilbert space to a symplectic
manifold (and self-adjoint operators to smooth
real valued functions on the manifold).
Therefore, one of our goals in this paper is to
relate spin$^c$ quantization to symplectic
geometry. This can be achieved by defining a
\emph{spin$^c$ prequantization} of a symplectic
manifold to be a spin$^c$ structure and a
connection on its determinant line bundle which
are compatible with the symplectic form (in a
certain sense). This definition is analogous to
the definition of prequantization in the context
of geometric quantization (see \cite{GQ} and
references therein). Our definition is different
but equivalent to the one in \cite{CKT}. It is
important to mention that in the equivariant
setting, a spin$^c$ prequantization for a
symplectic manifold $(M,\omega)$ determines a
moment map $\Phi\colon M\to\mathfrak{g}^*$, and
hence the action $G\circlearrowright (M,\omega)$
is Hamiltonian.

The cutting construction was originnaly
introduced by E. Lerman in \cite{L} for
symplecitc manifolds equipped with a Hamiltonian
circle action. In \cite{SF1} we explained how one
can cut a given $S^1$-equivariant spin$^c$
structure on an oriented Riemannian manifold.
Here we extend this construction and describe how
to cut a given $S^1$-equivariant spin$^c$
prequantization. This cutting process involves
two choices: a choice of an equivariant spin$^c$
prequantization for the complex plane $\mathbb
C$, and a choice of a level set
$\Phi^{-1}(\alpha)$ along which the cutting is
done. Our main theorem (Theorem \ref{main-thm})
reveals a quite interesting fact: Those two
choices must be compatible (in a certain sense)
in order to make the cutting construction
possible. In fact, each one of the two choices
determines the other (once we assume that cutting
is possible), so in fact only one choice is to be
made. This theorem also explains the `mysterious'
freedom one has when choosing a spin$^c$
structure on $\mathbb C$ in the first step of the
cutting construction: it is just the freedom of
choosing a `cutting point' $\alpha\in\mathfrak
g^*$ (or a level set of the moment map along
which the cutting is done). Since by our theorem,
$\alpha$ can never be a weight, we see why
spin$^c$ quantization must be additive under
cutting (a result already obtained in
\cite{SF1}).

This paper is organized as follows. In Section
\ref{Sec-preq} we review the definitions of the
spin groups, spin and spin$^c$ structures and
define the concept of spin$^c$ prequantization.
As an example we will use later, we construct a
prequantization for the complex plane. For
technical reasons, we chose to define spin$^c$
prequantization for manifold endowed with closed
two-forms (which may not be symplectic). In
Section \ref{Sec-Cut} we describe the cutting
process in steps and obtain our main theorem
relating the spin$^c$ prequantization for
$\mathbb C$ with the level set used for cutting.
In the last sections we discuss a couple of
examples.

Throughout this paper, all spaces are assumed to
be smooth manifolds, and all maps and actions are
assumed to be smooth. The principal action in a
principal bundle will be always a right action. A
real vector bundle $E$, equipped with a fiberwise
inner product will be called a \emph{Riemannian
vector bundle}. If the fibers are also oriented,
then its bundle of oriented orthonormal frames
will be denoted by $SOF(E)$. For an oriented
Riemannian manifold $M$, we will simply write
$SOF(M)$, instead of $SOF(TM)$.

\textbf{Acknowledgements.} I would like to thank
my supervisor, Yael Karshon, for offering me this
project, guiding and supporting me through the
process of developing and writing the material,
and for having always good advice and a lot of
patience. I also would like to thank Lisa Jeffrey
and Eckhard Meinrenken for useful discussions and
important comments.

\section{Spin$^c$
prequantization}\label{Sec-preq}
\subsection{Spin$^c$ structures}\ \\
In this section we recall the definition and
basic properties of the spin and spin$^c$ groups.
Then we give the definition of a spin$^c$
structure on a manifold, which is essential for
defining spin$^c$ prequantization.

\begin{definition} Let  $V$  be a finite dimensional vector space over  $\mathbb{K}=\mathbb{R}\mbox{ or }
\mathbb{C}$,  equipped with a symmetric bilinear
form  $B:V\times V\rightarrow\mathbb{K}$.  Define
the \emph{Clifford algebra} \ $Cl(V,B)$  to be
the quotient  $T(V)/I(V,B)$  where  $T(V)$  is
the tensor algebra of  $V$,  and  $I(V,B)$  is
the ideal generated by $\{ v\otimes v-B(v,v)\cdot
1\;:\; v\in V\}$.
\end{definition}

\begin{remark}
If $v_1,\dots,v_n$ is an orthogonal basis for
$V$, then $Cl(V,B)$ is the algebra generated by
$v_1,\dots,v_n$, subject to the relations
$v_i^2=B(v_i,v_i)\cdot 1$ and $v_i v_j=-v_j v_i$ for $i\neq j$.\\
Also note that $V$ is a vector subspace of
$Cl(V,B)$.
\end{remark}

\begin{definition}
If $V=\mathbb{R}^k$ and $B$ is minus the standard
inner product on $V$, then define the following
objects:
\begin{enumerate}
\item $C_k=Cl(V,B)$, and $C_k^c=Cl(V,B)\otimes\mathbb{C}$.\\ Those are finite dimensional algebras over $\mathbb{R}$ and $\mathbb{C}$, respectively.
\item The \emph{spin group} $$Spin(k)=\{v_1 v_2 \dots v_l\;:\; v_i\in\mathbb{R}^k,\ ||v_i||=1
\mbox{ and } 0\le l \mbox{ is even}\}\subset
C_k$$
\item The \emph{spin$^c$ group} $$Spin^c(k)= {\left(Spin(k)\times U(1)\right)}\diagup{K}$$ where $U(1)\subset\mathbb{C}$ is the unit circle, and
$K=\{(1,1),(-1,-1)\}$.
\end{enumerate}

\end{definition}

\begin{remark} \
\begin{enumerate}
\item Equivalently, one can define
\begin{multline*}
\qquad \ Spin^c(k)=\\ =\left\{c\cdot v_1\cdots
v_l\;: \linebreak \; v_i\in\mathbb{R}^k,\
||v_i||=1,\  0\le l \mbox{ is even, }\mbox{ and }
c\in U(1)\right\}\subset C^c_k
\end{multline*}
\item The group $Spin(k)$ is connected for $k\ge 2$.
\end{enumerate}
\end{remark}

\begin{prop}
\

\begin{enumerate}
\item There is a linear map $C_k\rightarrow C_k\;,\; x\mapsto x^t$ characterized by
$(v_1\dots v_l)^t=v_l\dots v_1$ for all
$v_1,\dots,v_l\in\mathbb{R}^k$.
\item For each $x\in Spin(k)$ and $y\in\mathbb{R}^k$, we have
$xyx^t\in\mathbb{R}^k$.
\item For each $x\in Spin(k)$, the map
$\lambda(x):\mathbb{R}^k\rightarrow\mathbb{R}^k\;,\;
y\mapsto xyx^t$ is in $SO(k)$, and
$\lambda:Spin(k)\rightarrow SO(k)$ is a double
covering for $k\ge 1$. It is a universal covering
map for $k\ge 3$.
\end{enumerate}
\end{prop}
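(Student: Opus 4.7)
The plan is to handle the three parts in sequence, with the crux being the identification, for a single unit vector $v$, of the map $y\mapsto vyv^t$ with a hyperplane reflection of $\mathbb{R}^k$.

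For part (1), I would first define an algebra anti-automorphism $\tau$ on the tensor algebra $T(V)$ as the unique linear extension of $v_1\otimes\cdots\otimes v_l\mapsto v_l\otimes\cdots\otimes v_1$. Each generator $v\otimes v - B(v,v)\cdot 1$ of the Clifford ideal is fixed by $\tau$, so $\tau$ preserves $I(V,B)$ and descends to a linear map $x\mapsto x^t$ on $C_k$; uniqueness is clear from the prescribed formula on products.

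For part (2) and the first assertion of (3), the key calculation is that for a unit vector $v\in\mathbb{R}^k$ one has $v^t = v$ and $v^2 = B(v,v)\cdot 1 = -1$, and the polarization identity $vy+yv = -2\langle v,y\rangle\cdot 1$ yields
\[ vyv = y - 2\langle v,y\rangle\, v, \]
which is exactly the reflection of $y$ across $v^\perp$. Hence for $x = v_1\cdots v_l\in Spin(k)$ with $l$ even, $\lambda(x)$ is an $l$-fold composition of reflections of $\mathbb{R}^k$; in particular $xyx^t\in\mathbb{R}^k$ and $\lambda(x)\in SO(k)$ since the determinant is $(-1)^l=1$. Multiplicativity $(xy)^t=y^tx^t$ then makes $\lambda$ a group homomorphism, and surjectivity onto $SO(k)$ is the Cartan--Dieudonn\'e theorem that every rotation is a product of an even number of reflections.

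Finally, for the kernel I would first use a telescoping computation with $v_i^2=-1$ to obtain $xx^t = (-1)^l = 1$, so that $x^t = x^{-1}$ and $\lambda$ becomes the restriction of the ordinary conjugation action. If $\lambda(x)=I$, then $x$ commutes with every element of $\mathbb{R}^k$, hence with the whole algebra $C_k$ these vectors generate. A standard fact about the Clifford algebra is that its center meets the even subalgebra only in $\mathbb{R}$ (for odd $k$ the volume element $e_1\cdots e_k$ is central but odd, so it is excluded from $Spin(k)$), so $x\in\mathbb{R}$; combined with $x^{-1}=x^t=x$ this gives $x^2=1$ and hence $x=\pm 1$. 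Thus $\ker\lambda=\{\pm 1\}$ and $\lambda$ is a double cover. For $k\ge 3$, the connectedness of $Spin(k)$ (from the preceding remark) together with $\pi_1(SO(k))=\mathbb{Z}/2$ forces the total space of this double cover to be simply connected, hence universal. The main obstacle I expect is the kernel computation: establishing that the center of $C_k$ meets the even subalgebra only in $\mathbb{R}$ requires a case split on the parity of $k$ and careful bookkeeping of the $\mathbb{Z}/2$-grading of $C_k$.
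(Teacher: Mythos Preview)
Your proposal is correct and follows the standard route (transpose anti-automorphism, reflection formula $vyv = y - 2\langle v,y\rangle v$, Cartan--Dieudonn\'e for surjectivity, center computation for the kernel). The paper, however, does not supply its own proof of this proposition at all: it simply refers the reader to page~16 of Friedrich's book \cite{Fr}, where essentially the same argument you outline is carried out. So there is nothing to compare beyond noting that you have reproduced the classical proof that the paper chose to cite rather than include.
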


For the proof, see page 16 in \cite{Fr}.

\begin{definition}
Let $M$ be a manifold, and $Q$ a principal
$SO(k)$-bundle on $M$. A \emph{spin$^c$
structure} on $Q$ is a principal
$Spin^c(k)$-bundle $P\rightarrow M$, together
with a map $\Lambda:P\rightarrow Q$ such that the
following diagram commutes.

$$
\begin{CD}
P\times Spin^c(k)     @>>>          P  \\
@VV\Lambda\times\lambda^c V   @VV\Lambda V \\
Q\times SO(k)       @>>>       Q\\
\end{CD}
$$\\

Here, the maps corresponding to the horizontal
arrows are the principal actions, and
$\lambda^c:Spin^c(k)\rightarrow SO(k)$ is given
by $[x,z]\mapsto\lambda(x)$, where
$\lambda:Spin(k)\rightarrow SO(k)$ is the double
covering.
\end{definition}

\begin{remark}\
\begin{enumerate}
\item  A spin$^c$ structure on an oriented Riemannian vector bundle $E$ is a spin$^c$ structure on the associated bundle of
oriented orthonormal frames,  $SOF(E)$.
\item A spin$^c$ structure on an oriented Riemannian manifold is a
spin$^c$ structure on its tangent bundle.
%\item Given a spin$^c$ structure on $Q\rightarrow M$, we can define \emph{its determinant line bundle} by
%$\mathbb{L}=P\times_{Spin^c(k)}\mathbb{C}$, where
%the left action of $Spin^c(k)$ on $\mathbb{C}$ is
%given by $[x,z]\cdot w=z^2 w$. This is a
%hermitian line bundle over $M$.
\end{enumerate}
\end{remark}

\subsection{Equivariant spin$^c$ structures}\ \\
\begin{definition}
Let $G,H$ be Lie groups. A \emph{$G$-equivariant
principal $H$-bundle} is a principal $H$-bundle
$\pi:Q\rightarrow M$ together with left
$G$-actions on $Q$ and $M$, such that:
\begin{enumerate}
\item $\pi(g\cdot q)=g\cdot\pi(q)$ for all $g\in G\;,\; q\in Q$\\
(i.e., $G$ acts on the fiber bundle
$\pi:Q\rightarrow M$).
\item $(g\cdot q)\cdot h=g\cdot(q\cdot h)$ for all $g\in G\;,\; q\in Q\;,\; h\in
H$\\
(i.e., the actions of $G$ and $H$ commute).
\end{enumerate}

\begin{remark}
It is convenient to think of a $G$-equivariant
principal $H$-bundle
in terms of the following commuting diagram (the horizontal arrows correspond to the $G$ and $H$ actions).\\
$$
\begin{CD}
G\times Q       @>>>     Q    @<<<     Q\times H\\
@VId\times\pi VV                   @VV\pi V           @.\\
G\times M       @>>>     M    @.\\
\end{CD}
$$\\
\end{remark}

\begin{definition}
Let $\pi:E\rightarrow M$ be a fiberwise  oriented
Riemannian vector bundle, and let $G$ be a Lie
group. A \emph{$G$-equivariant structure} on $E$
is an action of $G$ on the vector bundle, that
preserves the orientations and the inner products
of the fibers. We will say that $E$ is a
\emph{$G$-equivariant oriented Riemannian vector
bundle}.
\end{definition}

\begin{remark} \
\begin{enumerate}
\item A $G$-equivariant oriented Riemannian vector bundle $E$ over a
manifold $M$, naturally turns $SOF(E)$ into a
$G$-equivariant principal $SO(k)$-bundle, where
$k=rank(E)$.
\item If a Lie group G acts on an oriented Riemannian manifold $M$, by orientation preserving
isometries, then the frame bundle $SOF(M)$
becomes a $G$-equivariant principal
$SO(m)$-bundle, where $m=$dim$(M)$.
\end{enumerate}
\end{remark}

\end{definition}

\begin{definition}
Let $\pi:Q\rightarrow M$ be a $G$-equivariant
principal $SO(k)$-bundle. \emph{A $G$-equivariant
spin$^c$ structure} on $Q$ is a spin$^c$
structure $\Lambda:P\rightarrow Q$ on $Q$,
together with a a left action of $G$ on $P$, such
that
\begin{enumerate}
\item $\Lambda(g\cdot p)=g\cdot\Lambda(p)$ for all $p\in P$, $g\in
G$ (i.e., $G$ acts on the bundle $P\rightarrow
Q$).
\item $g\cdot(p\cdot x)=(g\cdot p)\cdot x$ for all $g\in G$, $p\in P$, $x\in
Spin(k)$\\ (i.e., the actions of $G$ and
$Spin^c(k)$ on $P$ commute).
\end{enumerate}
\end{definition}

\begin{remark}\label{Remark_spin-c_str} \
\begin{enumerate}
\item It is convenient to think of a $G$-equivariant spin$^c$
structure in terms of the following commuting
diagram (where the horizontal arrows correspond
to the principal and the $G$-actions).

$$
\begin{CD}
G\times P       @>>>     P    @<<<     P\times Spin^c(k)\\
@V Id\times\Lambda VV                    @V\Lambda VV          @V\Lambda\times\lambda^c VV\\
G\times Q       @>>>     Q    @<<<     Q\times SO(k)\\
@V Id\times\pi VV                   @V\pi VV           @.\\
G\times M       @>>>     M    @.\\
\end{CD}
$$\\
\item Note that in a $G$-equivariant spin$^c$
structure, the bundle $P\rightarrow M$ is a
$G$-equivariant principal $Spin^c(k)$-bundle.
%\item Given a $G$-equivariant spin$^c$ structure $P$ on $M$, its
%determinant line bundle
%$\mathbb{L}=P\times_{Spin^c(k)}\mathbb{C}$
%becomes a $G$-equivariant Hermitian line bundle.
\end{enumerate}
\end{remark}

\subsection{The definition of spin$^c$
prequantization}\label{def_of_preq}\ \\
In this section we define the concept of \emph{a
$G$-equivariant Spin$^c$ prequantization}. This
will consist of a $G$-equivariant spin$^c$
structure and a connection on the corresponding
$U(1)$-bundle, which is compatible with a given
two-form on our manifold. To motivate the
definition, we begin by proving the following
claim.

\begin{claim}
Let $M$ be a compact oriented Riemannian manifold
of dimension $2m$, on which a Lie group $G$ acts
by orientation preserving isometries, and  let
$P\to SOF(M)\to M$ be a $G$-equivariant spin$^c$
structure on $M$.

Assume that $\theta\colon
TP\to\mathfrak{u}(1)\cong i\mathbb R$ is a
$G$-invariant and $Spin^c(m)$-invariant
connection 1-form on the principal $S^1$-bundle
$\pi\colon P\to SOF(M)$, for which
$$\theta(\zeta_P)\colon P\to
\mathfrak{u}(1)$$
is a constant function for any $\zeta\in\mathfrak{spin}(m)$.\\
 For each
$\xi\in\mathfrak{g}=Lie(G)$ define a map
$$\phi^\xi\colon P\to\mathbb{R}\qquad,\qquad
\phi^\xi=-i\cdot\left(\iota_{\xi_P}\theta\right)\
$$ where $\xi_P$ is the vector field on $P$
generated by $\xi$.\\[10pt]
Then
\begin{enumerate}
\item For any $\xi\in\mathfrak{g}$, the map
$\phi^\xi$ is $Spin^c(2m)$-invariant, i.e.,
$\phi^\xi=\pi^*(\Phi^\xi)$ where $\Phi^\xi\colon
M\to\mathbb{R}$ is a smooth funtion.
\item For any $\xi\in\mathfrak{g}$, we have
$d\Phi^\xi=\iota_{\xi_M}\omega$, where $\omega$
is a real two-form on $M$, determined by the
equation $d\theta=\pi^*(-i\cdot\omega)$.
\item The map $$\Phi\colon
M\to\mathfrak{g}^*\qquad,\qquad
\Phi(m)\xi=\Phi^\xi(m)$$ is $G$-equivariant.
\end{enumerate}
\end{claim}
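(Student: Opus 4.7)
The overall strategy is to combine the $G$- and $Spin^c(2m)$-invariance of $\theta$ with Cartan's magic formula. All three assertions reduce to short computations with fundamental vector fields, and the key bookkeeping is that the $G$- and $Spin^c(2m)$-actions on $P$ commute --- this is built into the definition of a $G$-equivariant spin$^c$ structure.

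For part (1), commuting actions imply $(R_h)_*\xi_P|_p=\xi_P|_{p\cdot h}$ for every $h\in Spin^c(2m)$; together with $R_h^*\theta=\theta$ this gives $\phi^\xi(p\cdot h)=\phi^\xi(p)$, so $\phi^\xi$ descends to $\Phi^\xi\colon M\to\mathbb R$. Part (3) is the analogous computation on the $G$-side: the identity $\exp(t\xi)\cdot g = g\cdot\exp\bigl(t\,\mathrm{Ad}(g^{-1})\xi\bigr)$ yields $\xi_P|_{g\cdot p}=(L_g)_*\bigl(\mathrm{Ad}(g^{-1})\xi\bigr)_P|_p$, and then $L_g^*\theta=\theta$ gives $\phi^\xi(g\cdot p)=\phi^{\mathrm{Ad}(g^{-1})\xi}(p)$, which translates into the equivariance relation $\Phi(g\cdot m)=\mathrm{Ad}^*(g)\Phi(m)$.

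Part (2) requires a preliminary step: showing that $d\theta$ is basic for $\pi\colon P\to M$, so that the defining equation $d\theta=\pi^*(-i\omega)$ makes sense on the nose. I expect this to be the main obstacle. Invariance under $Spin^c(2m)$ follows at once from that of $\theta$, while horizontality amounts to $\iota_{\zeta_P}d\theta=0$ for every $\zeta\in\mathfrak{spin}^c(2m)=\mathfrak{spin}(2m)\oplus i\mathbb R$. Applying Cartan's formula
$$\iota_{\zeta_P}d\theta=\mathcal L_{\zeta_P}\theta - d(\iota_{\zeta_P}\theta),$$
the Lie derivative vanishes by $Spin^c(2m)$-invariance of $\theta$ (using connectedness of $Spin^c(2m)$), and $\iota_{\zeta_P}\theta$ is constant in each summand --- by hypothesis for $\zeta\in\mathfrak{spin}(2m)$, and because $\theta$ is a connection 1-form (so $\theta(\zeta_P)\equiv\zeta$) for $\zeta\in i\mathbb R$. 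The technical hypothesis on $\theta(\zeta_P)$ is used precisely here, in the $Spin(2m)$-direction where the connection property alone would not suffice.

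Once $\omega$ is known to exist, the moment map identity is immediate. Applying Cartan's formula to $\xi_P$ and using $\mathcal L_{\xi_P}\theta=0$ (from $G$-invariance of $\theta$) together with $\pi_*\xi_P=\xi_M$, one obtains
$$0 = d(\iota_{\xi_P}\theta)+\iota_{\xi_P}d\theta = d\bigl(i\pi^*\Phi^\xi\bigr)+\pi^*\bigl(-i\,\iota_{\xi_M}\omega\bigr),$$
so $\pi^*\bigl(d\Phi^\xi-\iota_{\xi_M}\omega\bigr)=0$, and since $\pi$ is a surjective submersion this forces $d\Phi^\xi=\iota_{\xi_M}\omega$, as required.
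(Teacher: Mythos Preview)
Your proposal is correct and follows essentially the same approach as the paper's proof: both establish part (1) from the commuting actions and $Spin^c$-invariance of $\theta$, part (3) via the identity $\xi_P|_{g\cdot p}=(L_g)_*\bigl(\mathrm{Ad}(g^{-1})\xi\bigr)_P|_p$, and part (2) by first using Cartan's formula on $\mathfrak{spin}^c$-directions to show $d\theta$ is horizontal (so that $\omega$ is well defined), and then on $\xi_P$ to obtain the moment map equation after cancelling $\pi^*$. The only cosmetic difference is that you spell out slightly more of the underlying vector-field identities; the parenthetical appeal to connectedness of $Spin^c(2m)$ is unnecessary, since differentiating the invariance $R_h^*\theta=\theta$ along $t\mapsto\exp(t\zeta)$ already gives $\mathcal L_{\zeta_P}\theta=0$.
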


\begin{proof}\
\begin{enumerate}

\item This follows from the fact that $\theta$ is
$Spin^c(m)$-invariant, and that the $G$ and
$Spin^c(m)$-actions on $P$ commute.
\item For any $\eta=(\zeta,b)\in\mathfrak{spin}^c(m)=\mathfrak{spin}(n)\oplus\mathfrak{u}(1)$, we
have

$$\iota_{\eta_P}\theta=\theta(\eta_P)=\theta(\zeta_P)+\theta(b_P)=\theta(\zeta_P)+b\ .$$

\noindent Since $\theta(\zeta_P)$ is constant by
assumption, we get that

$$\iota_{\eta_P}d\theta=L_{\eta_P}\theta-d\iota_{\eta_P}\theta
=0\ .$$ This implies that $d\theta$ is
horizontal, and hence $\omega$ is well defined by
the equation $d\theta=\pi^*(-i\cdot\omega)$.

Now, observe that
\begin{multline*}
\qquad\qquad\pi^*d\Phi^\xi=d\left(\pi^*\Phi^\xi\right)=d\phi^\xi=-i\;d\iota_{\xi_P}
\theta=-i\left[L_{\xi_P}\theta-\iota_{\xi_P}d\theta\right]=\\
=\iota_{\xi_P}(\pi^*\omega)=\pi^*(\iota_{\xi_M}\omega)\qquad
\end{multline*}
and since $\pi^*$ is injective, we get
$d\Phi^\xi=\iota_{\xi_M}\omega$ as needed.
\item If $g\in G$, $m\in M$, $\xi\in\mathfrak{g}$
and $p\in\pi^{-1}(m)$, then
\begin{multline*}
\qquad\qquad \Phi^\xi(g\cdot m)=\phi^\xi(g\cdot
p)=-i\left(\iota_{\xi_P}\theta\right)(g\cdot p)
=-i\left(\theta_{g\cdot p}(\xi_P|_{g\cdot
p})\right)=\\=-i\left(\theta_{g\cdot
p}(g\cdot(Ad_{g^{-1}}\xi)_P|_p)\right)=
-i\left(\iota
_{\left(Ad_{g^{-1}}\xi\right)_P}\theta\right)(p)=\\
\qquad\qquad\qquad=\phi^{Ad_{g^{-1}}\xi}(p)=\Phi^{Ad_{g^{-1}}\xi}(m)\hfill
\end{multline*}
and we ended up with $\Phi^\xi(g\cdot
m)=\Phi^{Ad_{g^{-1}}\xi}(m)$, which means that
$\Phi$ is $G$-equivariant.
\end{enumerate}
\end{proof}

The above claim suggests a compatibility
condition between a given two-form and a spin$^c$
structure on our manifold. We will work with
two-forms that are closed, but not necessarily
nondegenerate. The compatibility condition is
formulated in the following definition.

\begin{definition}
Let a Lie group $G$ act on a compact
$m$-dimensional manifold $M$, and let $\omega$ be
a $G$-invariant closed two-form (i.e.,
$g^*\omega=\omega$ for any $g\in G$). A
\emph{G-equivariant spin$^c$ prequiantization}
for $M$ is a $G$-equivariant spin$^c$ structure
$\pi\colon P\to SOF(M)\to M$ (with respect to an
invariant Riemannian metric and orientation), and
a $G$ and $Spin^c(m)$-invariant connection
$\theta\in\Omega^1(P;\mathfrak{u}(1))$ on $P\to
SOF(M)$, such that
$$\theta(\zeta_P)=0\text{\ \ \  for any }
\zeta\in\mathfrak{spin}(m)$$ and
$$d\theta=\pi^*(-i\cdot\omega)\ .$$
\end{definition}

\begin{remark}\label{moment map}
By the above claim, the action
$G\circlearrowright (M,\omega)$ is Hamiltonian,
with a moment map $\Phi\colon M\to\mathfrak{g}^*$
satisfying
$$\pi^*\left(\Phi^\xi\right)=-i\cdot\iota_{\xi_P}(\theta)
\mbox{\quad for any \quad}\xi\in\mathfrak{g}\ .$$
\end{remark}

\begin{remark} \label{conn on Spin-c bundle}A $G$-invariant connection
1-form $\theta$ on the $G$-equivariant principal
$Spin^c(m)$-bundle $P\to M$ induces a connection
1-form $\tilde\theta$ on the principal
$S^1$-bundle $P\to SOF(M)$ as follows.

Recall the determinant map
$$det\colon Spin^c(n)\to U(1)\qquad,\qquad
[A,z]\mapsto z^2\ .$$ This map induces a map on
the Lie algebras
$$det_*\colon\mathfrak{spin}^c(n)=\mathfrak{spin}(n)\oplus\mathfrak{u}(1)
\to\mathfrak{u}(1)\simeq i\mathbb{R}\qquad,\qquad
(A,z)\mapsto 2z\ .$$ This means that the map
$\frac{1}{2}det_*\colon\mathfrak{spin}^c(m)\to\mathfrak{u}(1)$
is just the projection onto the $\mathfrak{u}(1)$
component.

The composition $\frac{1}{2}det_*\circ\theta$
will then be a connection 1-form on $P\to
SOF(M)$, which is $G$-invariant, and for which
$\tilde\theta(\zeta_P)=\frac{1}{2}det_*(\zeta)=0$
 for any
$\zeta\in\mathfrak{spin}(m)$.
\end{remark}

\begin{remark}
The condition $\theta(\zeta_P)=0$ could have been
omitted, since our main theorem can be proved
without it. However, this condition is necessary
to obtain a discreet condition on the
prequantizable closed two forms. See the example
in Section \ref{Sec-Ex}.
\end{remark}

In the following claim, $M$ is an oriented
Riemannian $m$-dimensional manifold on which $G$
acts by orientation preserving isometries.

\begin{claim}\label{connection on P_det}
Let $P\to SOF(M)\to M$ be a $G$-equivariant
spin$^c$ structure on $M$. Let
$P_{det}=P/Spin(m)$ and $q\colon P\to P_{det}$
the quotient map. Let $\theta\colon
TP\to\mathfrak u(1)$ be a connection 1-form on
the $G$-equivariant principal $U(1)$-bundle $P\to
SOF(M)$.

Then $\theta=\frac{1}{2}\,q^*(\overline{\theta})$
for some connection one form $\overline\theta$ on
the $G$-equivariant principal $U(1)$ bundle
$P_{det}\to M$ if and only if $\theta$ is
$Spin^c(m)$-invariant and $\theta(\zeta_P)=0$ for
all $\zeta\in\mathfrak{spin}(m)$.
\end{claim}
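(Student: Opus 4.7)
The plan is to study how the quotient map $q\colon P\to P_{det}$ intertwines the group actions with the connection-form conditions on the two $U(1)$-bundles. The key structural fact, already highlighted in Remark \ref{conn on Spin-c bundle}, is that $P_{det}\to M$ has structure group $Spin^c(m)/Spin(m)\cong U(1)$ via the determinant $[x,z]\mapsto z^2$, with Lie algebra map $\tfrac{1}{2}\det_*\colon\mathfrak{spin}^c(m)\to\mathfrak u(1)$ equal to projection onto the $\mathfrak u(1)$-factor. The right $Spin^c(m)$-action on $P$ therefore descends through $q$ to the principal $U(1)$-action on $P_{det}$, and it is precisely the squaring inside $\det$ that will force the factor $\tfrac{1}{2}$.

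For the forward direction, assume $\theta=\tfrac{1}{2}\,q^*\overline\theta$. If $\zeta\in\mathfrak{spin}(m)$, then $\zeta_P$ is tangent to a fiber of $q$, hence $q_*\zeta_P=0$ and $\theta(\zeta_P)=0$. For $Spin^c(m)$-invariance, given $g\in Spin^c(m)$ one has $q\circ R_g=R_{\det(g)}\circ q$, so $R_g^*\theta=\tfrac{1}{2}q^*R_{\det(g)}^*\overline\theta=\theta$, since any connection on a principal $U(1)$-bundle is $U(1)$-invariant by abelianness.

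For the reverse direction, I would first invoke the standard descent lemma on the principal $Spin(m)$-bundle $q\colon P\to P_{det}$: a $1$-form on $P$ pulls back from $P_{det}$ iff it is $Spin(m)$-invariant and annihilates the vertical vectors. The $Spin(m)$-invariance follows from the assumed $Spin^c(m)$-invariance of $\theta$, and the vertical vanishing is exactly the hypothesis on $\zeta_P$. This produces a unique $1$-form $\tilde\theta$ on $P_{det}$ with $q^*\tilde\theta=\theta$, and I would set $\overline\theta:=2\tilde\theta$.

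It remains to verify $\overline\theta$ is a connection on $P_{det}\to M$. For $U(1)$-invariance, write any $w\in U(1)$ as $z^2$, lift to $g=[1,z]\in Spin^c(m)$, use $q\circ R_g=R_w\circ q$ and the injectivity of $q^*$. For the fundamental-vector-field identity $\overline\theta(w^*_{P_{det}})=w$, take $\eta=(0,w/2)$ inside the $\mathfrak u(1)$-subalgebra that cuts out $P\to SOF(M)$; the connection property of $\theta$ gives $\theta(\eta_P)=w/2$, while $q_*\eta_P=w^*_{P_{det}}$ by the factor-of-two computation, so $\overline\theta(w^*_{P_{det}})=2\theta(\eta_P)=w$. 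The only real subtlety in the whole proof is this careful bookkeeping of the factor $2$ coming from the squaring in $\det$.
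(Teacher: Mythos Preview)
Your proof is correct and follows essentially the same approach as the paper's: both directions use that $q$ is a principal $Spin(m)$-bundle to reduce the descent question to invariance plus vertical vanishing, and both trace the factor $\tfrac{1}{2}$ to the squaring in $\det\colon [A,z]\mapsto z^2$. The only cosmetic differences are that the paper decomposes $g=[A,1][1,z]$ for the forward $Spin^c(m)$-invariance (whereas you use $q\circ R_g=R_{\det g}\circ q$ directly), and the paper writes out the well-definedness check explicitly rather than citing the descent lemma.
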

\noindent Here is the relevant diagram.
$$\begin{CD}
P @>q>> P_{det}\\
@VVV @VVV\\
SOF(M) @>>> M\\
\end{CD}$$
Note that this is not a pullback diagram. The
pullback of $P_{det}$ under the projection
$SOF(M)\to M$ is the square of the principal
$U(1)$ bundle $P\to SOF(M)$.
\begin{proof}[Proof of Claim \ref{connection on
P_det}] Assume that
$\theta=\frac{1}{2}q^*(\overline\theta)$. Then
for any $g\in Spin^c(m)\colon P\to P$, write
$g=[A,z]$ with $A\in Spin(m)$ and $z\in U(1)$.
Since $\theta$ is $U(1)$-invariant, we have
$$g^*\theta=[A,1]^*[1,z]^*\theta=[A,1]^*\theta=
\frac{1}{2}\,[A,1]^*q^*\overline\theta=\frac{1}{2}\,q^*\overline\theta=
\theta\ ,$$ and so $\theta$ is
$Spin^c(m)$-invariant. If
$\zeta\in\mathfrak{spin}(m)$ then
$q_*(\zeta_P)=0$, which implies
$\theta(\zeta_P)=0$.

Conversely, assume that $\theta$ is
$Spin^c(m)$-invariant with $\theta(\zeta_P)=0$
for all $\zeta\in\mathfrak{spin}(m)$. Define a
1-form $TP_{det}\to\mathfrak u(1)$ by
$$\overline\theta(q_*v)=2\,\theta(v)\quad\mbox{for}\quad
v\in TP\ .$$ This will be well defined, since if
$q_*v=q_*v'$ for $v\in T_xP$ and $v'\in
T_{xg}P_{det}$ where $g\in Spin(m)$, then
$q_*(v-v'g^{-1})=0$, which implies that
$v-v'g^{-1}=\zeta_P$ for some
$\zeta\in\mathfrak{spin}(m)$. The fact that
$\theta(\zeta_P)=0$ will imply that
$\theta(v)=\theta(v')$. Smoothness and
$G$-invariance of $\overline\theta$ are straight
forward.

We also need to check that $\overline\theta$ is
vertical (i.e., that
$\overline\theta(\xi_{P_{det}})=\xi$ for
$\xi\in\mathfrak u(1)$). Note that
$Spin^c(m)/Spin(m)$ is isomorphic to $U(1)$ via
the isomorphism taking the class of $[A,z]\in
Spin^c(m)$ to $z^2\in U(1)$. This will imply that
$q_*(\xi_P)=2\,\xi_{P_{det}}$, from which we can
conclude that $\overline\theta$ is vertical.
\end{proof}
\subsection{Spin$^c$ prequantizations for $\mathbb{C}$}\label{prequan. for C}\
\\
For the purpose of cutting, we will need to
choose an $S^1$-equivariant spin$^c$
prequantization on the complex plane. The
$S^1$-action on $\mathbb{C}$ is given by
$$(a,z)\mapsto a^{-1}\cdot z\qquad,\qquad a\in
S^1,\ z\in\mathbb{C}\ .$$ We take the standard
orientation and Riemannian structure on
$\mathbb{C}$ and choose our two-form to be
$$\omega_\mathbb{C}=2\cdot dx\wedge
dy=-i\cdot dz\wedge d\bar z\ .$$

For each odd integer $\ell\in\mathbb{Z}$ we will
define an $S^1$-equivariant spin$^c$
prequantization for $S^1\circlearrowright(\mathbb
C,\omega_\mathbb C)$. The prequantization will be
denoted as
$(P_\mathbb{C}^\ell,\tilde\theta_\mathbb C)$, and
defined as follows.

Let $P_\mathbb{C}^\ell=\mathbb{C}\times
Spin^c(2)$ be the the trivial principal
$Spin^c(2)$-principal bundle over $\mathbb{C}$
with the non-trivial $S^1$-action
$$S^1\times P_\mathbb C^\ell\to P_\mathbb C^\ell\qquad,\qquad(e^{i\varphi}z,(z,[a,w]))\mapsto
(e^{-i\varphi},[x_{-\varphi/2}\cdot
a,e^{-i\ell\varphi/2}\cdot w])$$ where
$x_\varphi=\cos\varphi+\sin\varphi\cdot e_1e_2\in
Spin(2)$. Note that since $\ell\in\mathbb Z$ is
odd, this action is well defined. Next we define
a connection $$\theta_\mathbb C\colon
TP^\ell_\mathbb
C\to\mathfrak{spin}^c(2)=\mathfrak{spin}(2)\oplus\mathfrak
u(1)\ .$$ Denote by $\pi_1\colon P^\ell_\mathbb
C\to \mathbb C $ and $\pi_2\colon P^\ell_\mathbb
C \to Spin^c(2)$ the projections, and by
$\theta^R$ the right-invariant Maurer-Cartan form
on $Spin^c(2)$. Then set
$$\theta_\mathbb C\colon TP_\mathbb C^\ell\to Spin^c(2)
\qquad,\qquad\theta_\mathbb
C=\pi_2^*(\theta^R)+\frac{1}{2}\;\pi_1^*(\bar
z\,dz-z\,d\bar z)\ .$$ Note that $\pi_1^*(\bar
z\,dz-z\,d\bar z)$ takes values in $i\mathbb
R=\mathfrak u(1)\subset\mathfrak{spin}^c(2)$, and
that the connection $\theta_\mathbb C$ does not
depend on $\ell$.\\ Finally, let
$$\tilde\theta_\mathbb{C}=\frac{1}{2}\,det_*\circ\theta_\mathbb{C}\ .$$

\begin{claim}
For any odd $\ell\in\mathbb Z$, the pair
$(P^\ell_\mathbb C,\tilde\theta_\mathbb C)$ is an
$S^1$-equivariant spin$^c$ prequantization for
$(\mathbb C,\omega_\mathbb C)$.
\end{claim}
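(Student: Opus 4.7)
The plan is to verify each of the four defining conditions for $(P^\ell_\mathbb C,\tilde\theta_\mathbb C)$ to be an $S^1$-equivariant spin$^c$ prequantization of $(\mathbb C,\omega_\mathbb C)$: that $P^\ell_\mathbb C$ is an $S^1$-equivariant spin$^c$ structure on $\mathbb C$, that $\tilde\theta_\mathbb C$ is an $S^1$- and $Spin^c(2)$-invariant connection on the $U(1)$-bundle $P^\ell_\mathbb C\to SOF(\mathbb C)$, that $\tilde\theta_\mathbb C(\zeta_P)=0$ for all $\zeta\in\mathfrak{spin}(2)$, and that $d\tilde\theta_\mathbb C=\pi^*(-i\omega_\mathbb C)$. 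A key observation that trivializes much of the invariance bookkeeping is that $Spin^c(2)$ is abelian, since both $Spin(2)$ and $U(1)$ are, and a quotient of an abelian group is abelian.

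First I would check that $\Lambda\colon\mathbb C\times Spin^c(2)\to SOF(\mathbb C)=\mathbb C\times SO(2)$ sending $(z,[a,w])\mapsto(z,\lambda(a))$ is a spin$^c$ structure, which is immediate from the definitions. For the $S^1$-equivariance, the oddness of $\ell$ is precisely what makes the formula for the $S^1$-action on $P^\ell_\mathbb C$ well-defined modulo $K$, and compatibility with the $SO(2)$-action on $\mathbb C$ boils down to $\lambda(x_{-\varphi/2})$ being the rotation by $-\varphi$, which matches the differential of $z\mapsto e^{-i\varphi}z$. Next I would verify that $\theta_\mathbb C$ is an $S^1$-invariant connection 1-form on the $Spin^c(2)$-bundle $P^\ell_\mathbb C\to\mathbb C$. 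The correction term $\pi_1^*(\bar z\,dz-z\,d\bar z)$ is horizontal, so the verticality property $\theta_\mathbb C(\xi_P)=\xi$ reduces to the Maurer-Cartan identity on the $Spin^c(2)$-factor, and Ad-equivariance is trivial on an abelian group. For $S^1$-invariance, both $\bar z\,dz$ and $z\,d\bar z$ are individually invariant under $z\mapsto e^{-i\varphi}z$, and the $S^1$-action on the $Spin^c(2)$ factor is by left translation, which preserves $\theta^R$ since $\mathrm{Ad}$ is trivial. Now Remark \ref{conn on Spin-c bundle} produces an $S^1$-invariant connection $\tilde\theta_\mathbb C$ on the $U(1)$-bundle $P^\ell_\mathbb C\to SOF(\mathbb C)$ with $\tilde\theta_\mathbb C(\zeta_P)=0$, and Claim \ref{connection on P_det} then gives $Spin^c(2)$-invariance.

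Finally, for the curvature relation, the fact that $\frac{1}{2}\det_*$ restricts to the identity on the $\mathfrak u(1)$-summand lets us unpack the definition as $\tilde\theta_\mathbb C=\frac{1}{2}\det_*\circ\pi_2^*\theta^R+\frac{1}{2}\pi_1^*(\bar z\,dz-z\,d\bar z)$. The first summand is the pullback via the Lie group homomorphism $\det$ of the Maurer-Cartan form on $U(1)$, which is closed by the Maurer-Cartan equation on an abelian group. For the second summand, a direct calculation gives $d(\bar z\,dz-z\,d\bar z)=-2\,dz\wedge d\bar z$, so $d\tilde\theta_\mathbb C=-\pi_1^*(dz\wedge d\bar z)=\pi_1^*(-i\omega_\mathbb C)$, as desired. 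The main obstacle is simply keeping straight the conventions for right versus left Maurer-Cartan forms, fundamental vector fields of right principal actions, and the action of $\det_*$ on each summand of $\mathfrak{spin}^c(2)$; once the abelianness of $Spin^c(2)$ is exploited, the genuine content is essentially the two-line calculation of $d(\bar z\,dz-z\,d\bar z)$.
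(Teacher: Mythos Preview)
Your proposal is correct and follows essentially the same approach as the paper: both exploit that $Spin^c(2)$ is abelian to handle invariance, compute $d\tilde\theta_\mathbb C$ by noting the Maurer-Cartan term is closed and that $d(\bar z\,dz-z\,d\bar z)=-2\,dz\wedge d\bar z$, and invoke Remark~\ref{conn on Spin-c bundle} for the vanishing on $\mathfrak{spin}(2)$. You are in fact more thorough than the paper in explicitly verifying the $S^1$-equivariance of the spin$^c$ structure and the $Spin^c(2)$-invariance of $\tilde\theta_\mathbb C$, which the paper's proof leaves implicit.
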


\begin{proof}
The 1-form $\theta_\mathbb C$ (and hence
$\tilde\theta_\mathbb C$) is $S^1$-invariant,
since $\bar z\,dz-z\,d\bar z$ is an
$S^1$-invariant 1-form on $\mathbb C$, and since
the group $Spin^c(2)$ is abelian. The 1-form
$\tilde\theta_\mathbb C$ is given by
$$\tilde\theta_\mathbb C=\frac{1}{2}\,det_*\circ\theta_\mathbb C=
\frac{1}{2}\,det_*\circ\pi_2^*(\theta^R)+\frac{1}{2}\;\pi_1^*(\bar
z\,dz-z\,d\bar z)$$ and therefore
$$d\left(\tilde\theta_\mathbb
C\right)=0+\frac{1}{2}\,\pi_1^*(d\bar z\wedge
dz-dz\wedge d\bar z)=\pi_1^*\left(-dz\wedge d\bar
z\right)=\pi_1^*(-i\cdot\omega_\mathbb C)$$ as
needed. Finally, by Remark \ref{conn on Spin-c
bundle}, we have $\tilde\theta_\mathbb
C(\zeta_{P_\mathbb C^\ell})=0$ for all
$\zeta\in\mathfrak{spin}(2)$.

\end{proof}

\section{Cutting of a spin$^c$
prequantization}\label{Sec-Cut}

The process cutting consists of several steps:
Taking the product, restricting and taking the
quotient of spin$^c$ structures. We start by
discussing those constructions independently.

\subsection{The product of two spin$^c$
prequantizations}\ \\

Let a Lie group $G$ act by orientation preserving
isometries on two oriented Riemannian manifolds
$M$ and $N$, of dimensions $m$ and $n$,
respectively. Given two equivariant spin$^c$
structures $P_M,P_N$ on $M,N$, we can take their
`product' as follows. First, note that $P_M\times
P_N$ is a $G$-equivariant principal
$Spin^c(m)\times Spin^c(n)$-bundle on $M\times
N$. Second, observe that $Spin^c(m)$ and
$Spin^c(n)$ embed naturally as subgroups of
$Spin^c(m+n)$, and thus give rise to a
homomorphism
$$Spin^c(m)\times Spin^c(n)\to Spin^c(m+n)\qquad,\qquad (x,y)\mapsto x\cdot y\ .$$
This homomorphism is used to define a principal
$Spin^c(m+n)$-bundle on $M\times N$, denoted
$P_{M\times N}$, as a fiber bundle associated to
$P_M\times P_N$.

In the following claim, $\theta^L$ is the left
invariant Maurer-Cartan 1-form on the group
$Spin^c(m+n)$, and $\omega_M,\omega_N$ are closed
$G$-invariant two forms on $M,N$.

\begin{claim}\label{prequan. for products}
Let $(P_M,\theta_M)$ and $(P_N,\theta_N)$ be two
$G$-equivariant spin$^c$ prequantizations for
$(M,\omega_M)$ and $(N,\omega_N)$, respectively.

Let $$P_{M\times N}=\left(P_M\times
P_N\right)\times_{Spin^c(m)\times
Spin^c(n)}Spin^c(m+n)$$ and $$\theta_{M\times
N}=\theta_M+\theta_N+\frac{1}{2}\,det_*\circ\theta^L\in\Omega^1(P_{M\times
N};\mathfrak{u}(1))\ .$$ Then $(P_{M\times
N},\theta_{M\times N})$ is a $G$-equivariant
spin$^c$ prequantization for $(M\times
N,\omega_M\oplus~\omega_N)$, called \emph{the
product of $(P_M,\theta_M)$ and
$(P_N,\theta_N)$}.
\end{claim}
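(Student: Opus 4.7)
The plan is to interpret $\theta_{M\times N}$ as the descent of an explicit $\mathfrak{u}(1)$-valued 1-form
$$\widetilde\theta = p_M^*\theta_M + p_N^*\theta_N + \tfrac12\,det_*\bigl(p_3^*\theta^L\bigr)$$
on the cover $\widetilde P = P_M\times P_N\times Spin^c(m+n)$ of $P_{M\times N}$, where $p_M,p_N,p_3$ are the obvious projections, and then to verify each of the prequantization axioms in turn. The product spin$^c$ structure itself comes from lifting the natural $SO(m)\times SO(n)$-equivariant embedding $SOF(M)\times SOF(N)\hookrightarrow SOF(M\times N)$; the diagonal $G$-action on $P_M\times P_N$, trivial on the $Spin^c(m+n)$-factor, descends to $P_{M\times N}$ and commutes with the principal action, giving a $G$-equivariant spin$^c$ structure on $M\times N$.

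For descent of $\widetilde\theta$, I would check it is basic for the $Spin^c(m)\times Spin^c(n)$-action. Invariance follows from the individual invariances of $\theta_M,\theta_N$ and the left-invariance of $\theta^L$. Horizontality reduces to the identity
$$\theta_M\bigl((\zeta_1)_{P_M}\bigr)+\theta_N\bigl((\zeta_2)_{P_N}\bigr)=\tfrac12\,det_*(\zeta_1+\zeta_2)$$
for $(\zeta_1,\zeta_2)\in\mathfrak{spin}^c(m)\oplus\mathfrak{spin}^c(n)$: writing $\zeta_j=(A_j,b_j)$ with $A_j\in\mathfrak{spin}$, $b_j\in\mathfrak u(1)$, both sides equal $b_1+b_2$, since $\theta_M,\theta_N$ vanish on the $\mathfrak{spin}$-parts by hypothesis, equal the identity on their $\mathfrak u(1)$-parts as $U(1)$-connections, and $\tfrac12 det_*$ restricted to $\mathfrak{spin}^c(m)\oplus\mathfrak{spin}^c(n)$ is the sum of the two $\mathfrak u(1)$-projections. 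Centrality of $\mathfrak u(1)\subset\mathfrak{spin}^c(m+n)$ enters to kill the $Ad$-correction in $\theta^L$ along the action.

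For the remaining axioms on $\theta_{M\times N}$, verticality $\theta_{M\times N}(\xi_{P_{M\times N}})=\xi$ for $\xi\in\mathfrak u(1)$ holds since only the Maurer-Cartan summand contributes and $\tfrac12 det_*$ is the identity on $\mathfrak u(1)$; $Spin^c(m+n)$-invariance uses the transformation law of $\theta^L$ together with centrality of $\mathfrak u(1)$; vanishing on $\mathfrak{spin}(m+n)$ follows from the corresponding vanishing of $\theta_M,\theta_N$ and the fact that $\tfrac12 det_*$ annihilates all of $\mathfrak{spin}(m+n)$; and $G$-invariance is inherited from that of $\theta_M$ and $\theta_N$. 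Finally, $d\theta_{M\times N}$ picks up $d\theta_M+d\theta_N=\pi^*(-i(\omega_M\oplus\omega_N))$ from the factor-wise prequantization hypotheses, while the Maurer-Cartan contribution equals $-\tfrac14\,det_*[\theta^L,\theta^L]$ by the structure equation and vanishes because $det_*$ lands in the abelian $\mathfrak u(1)$.

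The main obstacle will be the descent/horizontality step: it requires carefully aligning the embedding $\mathfrak{spin}^c(m)\oplus\mathfrak{spin}^c(n)\hookrightarrow\mathfrak{spin}^c(m+n)$ with the explicit $Spin^c(m)\times Spin^c(n)$-action on the $Spin^c(m+n)$-factor and the left-invariance of $\theta^L$. Once descent is established, every remaining verification reduces to the centrality of $\mathfrak u(1)$ and the hypotheses on $\theta_M,\theta_N$.
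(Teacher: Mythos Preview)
Your proof is correct and follows essentially the same approach as the paper's: verify invariance properties, compute the curvature, and check the vanishing on $\mathfrak{spin}(m+n)$. You are in fact more careful than the paper on two points---you explicitly verify that $\widetilde\theta$ descends (which the paper relegates to Remark~\ref{remark product}), and you correctly argue via the structure equation that $d\bigl(\tfrac12 det_*\circ\theta^L\bigr)=0$ because $det_*$ kills brackets, whereas the paper writes the imprecise ``$d\theta^L=0$''.
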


\begin{remark}\label{remark product} \
\begin{enumerate}
\item More specifically, the connection
$\theta_{M\times N}$  is given by
$$\theta_{M\times
N}(q_*(u,v,\xi^L))=\theta_M(u)+\theta_N(v)+\frac{1}{2}\,det_*(\xi)$$
where $u\in TP_M,\ v\in TP_N,\
\xi\in\mathfrak{spin}^c(m+n)$ and $$q\colon
P_M\times P_N\times Spin^c(m+n)\to P_{M\times
N}$$ is the quotient map. This is well defined
since $\theta_M$ and $\theta_N$ are
spin$^c$-invariant.

\item The $G$-action on $M\times N$ can be taken to
be either the diagonal action $$g\cdot
(x,y)=(g\cdot x,g\cdot y)$$ or the `M-action' $$
g\cdot (x,y)=(g\cdot x, y)$$ and $(P_{M\times
N},\theta_{M\times N})$ will be a $G$-equivariant
prequantization with respect to any of those
actions.
\item The map $P_{M\times N}\to SOF(M\times N)$ is the
natural one induced from $P_M\to SOF(M)$ and
$P_N\to SOF(N)$, using the fact that
$$SOF(M\times N)\cong \left(SOF(M)\times
SOF(N)\right)\times_{SO(m)\times SO(n)}SO(m+n)\
.$$
\end{enumerate}
\end{remark}

\begin{proof}
The connection $\theta_{M\times N}$ is $G$ and
$Spin^c(m+n)$-invariant, since $\theta_M$ and
$\theta_N$ have the same invariance properties.
Moreover, since $d\theta^L=0$, we get that
$$d(\theta_{M\times
N})=d(\theta_M)+d(\theta_N)=\pi^*(-i\cdot\omega_M\oplus\omega_N)$$
as needed, where $\pi\colon P_{M\times N}\to
M\times N$ is the projection.\\
Finally, $\theta_{M\times N}(\zeta_{P_{M\times
N}})=0$ for all $\zeta\in\mathfrak{spin}(m+n)$
since $\frac{1}{2}det_*(\zeta)=0$.
\end{proof}

\subsection{Restricting a spin$^c$
prequantization}\label{restr prequan} \ \\
Assume that a Lie group $G$ acts on an $m$
dimensional oriented Riemannian manifold $M$ by
orientation preserving isometries. Let $Z\subset
M$ be a $G$-invariant  co-oriented submanifold of
co-dimension 1. Then there is a natural map
$$i\colon SOF(Z)\to
SOF(M)\qquad,\qquad
i(f)(a_1,\dots,a_m)=f(a_1,\dots,a_{m-1})+a_m\cdot
v_p$$ where
$f\colon\mathbb{R}^{m-1}\xrightarrow{\sim}T_pZ$
is a frame in $SOF(Z)$, and $v\in\Gamma(TM)$ is
the vector field on $Z$ of positive unit vectors
orthogonal to $TZ$.

A $G$-equivariant spin$^c$ structure $P$ on $M$
can be restricted to $Z$, by setting
$$P_Z=i^*(P)\ ,$$
i.e., $P_Z$ is the pullback under $i$ of the
circle bundle $P\to SOF(M)$. The relevant diagram
is
$$
\begin{CD}
P_Z=i^*(P) @>i'>> P \\
@VVV @VVV \\
SOF(Z) @>i>> SOF(M)\\
@VVV @VVV\\
Z @>>> M\\
\end{CD}
$$\\
The principal action on $P_Z\to Z$ comes from the
natural inclusion $Spin^c(m-1)\hookrightarrow
Spin^c(m)$, and the $G$-action on $P_Z$ is
induced from the one on $P$.

Furthermore, if a connection 1-form $\theta$ is
given on the circle bundle $P\to SOF(M)$, we can
restrict it to a connection 1-form
 $\theta_Z$ on $P_Z\to SOF(Z)$ by letting $$\theta_Z=(i')^*\theta\ .$$

\begin{claim}\label{prequan. for Z}
Let $(P,\theta)$ be a $G$-equivariant spin$^c$
prequantization for $(M,\omega)$ (for a closed
$G$-invariant two form $\omega$), and $Z\subset
M$ a co-oriented $G$-invariant submanifold of
co-dimension 1. Then the pair $(P_Z,\theta_Z)$ is
a $G$-equivariant spin$^c$ prequantization for
$(Z,\omega|_Z)$.
\end{claim}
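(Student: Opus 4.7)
The plan is to verify the three defining conditions for a $G$-equivariant spin$^c$ prequantization: that $(P_Z,\theta_Z)$ is a $G$-equivariant spin$^c$ structure with a $G$- and $Spin^c(m-1)$-invariant connection 1-form $\theta_Z$; that $\theta_Z(\zeta_{P_Z})=0$ for every $\zeta\in\mathfrak{spin}(m-1)$; and that $d\theta_Z=\pi_Z^*(-i\cdot\omega|_Z)$, where $\pi_Z\colon P_Z\to Z$ is the projection. Since everything on the $Z$-side is defined by pullback along $i'\colon P_Z\to P$, each verification should reduce, by naturality, to the corresponding property of $(P,\theta)$.

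First I would justify that $P_Z=i^*(P)$ is genuinely a $G$-equivariant principal $Spin^c(m-1)$-bundle over $Z$. The key observation is that under $\lambda^c\colon Spin^c(m)\to SO(m)$, the subgroup $Spin^c(m-1)$ maps into $SO(m-1)\subset SO(m)$, and the latter preserves the image $i(SOF(Z))\subset SOF(M)$ because elements of $SO(m-1)$ fix the last frame vector $v_p$ transverse to $Z$. Hence the $Spin^c(m)$-action on $P$ restricts to a $Spin^c(m-1)$-action on the preimage $P_Z$, making it the desired principal bundle. The $G$-action on $P$ restricts to $P_Z$ because $Z$ is $G$-invariant and $G$ acts by orientation-preserving isometries, so it preserves both $SOF(Z)$ and the positive unit normal field.

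Next I would check the connection conditions. The form $\theta_Z=(i')^*\theta$ is automatically a $U(1)$-connection 1-form on $P_Z\to SOF(Z)$, because $i'$ is $U(1)$-equivariant and covers $i$. Its $G$-invariance is immediate from the $G$-invariance of $\theta$ and the $G$-equivariance of $i'$; its $Spin^c(m-1)$-invariance follows from the $Spin^c(m)$-invariance of $\theta$ together with the fact that the $Spin^c(m-1)$-action on $P_Z$ is the restriction of the $Spin^c(m)$-action on $P$. For any $\zeta\in\mathfrak{spin}(m-1)\subset\mathfrak{spin}(m)$ the fundamental vector field on $P_Z$ equals the restriction of $\zeta_P$ to $P_Z$, i.e.\ $i'_*(\zeta_{P_Z})=\zeta_P\circ i'$, so
\[
\theta_Z(\zeta_{P_Z})=\theta(\zeta_P)\circ i'=0.
\]

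Finally, the curvature identity follows from pure naturality:
\[
d\theta_Z=d(i')^*\theta=(i')^*d\theta=(i')^*\pi^*(-i\cdot\omega)=\pi_Z^*\,i^*(-i\cdot\omega)=\pi_Z^*(-i\cdot\omega|_Z).
\]
I do not expect any serious obstacle; the only point requiring care is the first step, namely recognising that the naive pullback of the $U(1)$-bundle $P\to SOF(M)$ along $i\colon SOF(Z)\to SOF(M)$ automatically carries the full $Spin^c(m-1)$-structure, which rests on the compatibility between the inclusion $Spin^c(m-1)\hookrightarrow Spin^c(m)$ and the stabiliser of the normal direction under $SO(m)$.
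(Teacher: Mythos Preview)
Your proof is correct and follows essentially the same approach as the paper: both verify the curvature identity $d\theta_Z=(i')^*d\theta=(i')^*\pi^*(-i\cdot\omega)=\pi_Z^*(-i\cdot\omega|_Z)$ and the vanishing $\theta_Z(\zeta_{P_Z})=\theta(\zeta_P)=0$ for $\zeta\in\mathfrak{spin}(m-1)$ by straightforward pullback naturality. The paper's proof is terser and takes the $G$-equivariant spin$^c$ structure on $P_Z$ and the invariance of $\theta_Z$ as already established in the preceding construction of \S\ref{restr prequan}, whereas you spell these out explicitly, but the substance is the same.
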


\begin{proof}

$$d(\theta_Z)=(i')^*(d\theta)=
(i')^*\pi^*(-i\cdot\omega)=\pi^*(-i\cdot\omega|_Z)$$
as needed, and
$$\theta_Z(\zeta_{P_Z})=\theta(\zeta_P)=0$$ for all $\zeta\in\mathfrak{spin}(m-1)$.
\end{proof}

\subsection{Quotients of spin$^c$
prequantization} \ \\
Here is a general fact about connections on
principal bundles and their quotients.

\begin{claim}\label{quotient conn.}
Let $H,K,G$ be three Lie groups, and $P\to X$ an
$H$-equivariant and $K$-equivariant principal
$G$-bundle. Assume that $H$ acts freely on $X$,
and that the $H$ and $K$-actions on $P$ commute
(i.e., $h\cdot(k\cdot y)=k\cdot (h\cdot y)$ for
all $h\in H,\ k\in K,\ y\in P$), then:
\begin{enumerate}
\item $\pi\colon P/H\to X/H$ is a $K$-equivariant principal
$G$-bundle.
\item If $\theta\colon TP\to\mathfrak{g}$ is a
connection 1-form, and $q\colon P\to P/H$ is the
quotient map, then $\theta=q^*(\bar\theta)$ for
some connection 1-form $\bar\theta\colon
T(P/H)\to\mathfrak{g}$ if and only if $\theta$ is
$H$-invariant, and $\theta(\xi_P)=0$ for all
$\xi\in\mathfrak{h}$.
\end{enumerate}

\end{claim}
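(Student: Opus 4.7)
The plan is to verify the two clauses in turn, with the bulk of the work going into the descent of the connection in part~(2).

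For part~(1), I would first observe that the $H$-action on $P$ is free: if $h\cdot y=y$, projecting to $X$ gives $h\cdot\pi(y)=\pi(y)$, forcing $h=e$ by freeness on $X$. Assuming the action is also proper (implicit in the setup), $P/H$ is a smooth manifold and $q\colon P\to P/H$ is a principal $H$-bundle. Because the $G$- and $H$-actions on $P$ commute, the $G$-action descends to $P/H$ and its orbits are precisely the fibers of the induced map $\pi\colon P/H\to X/H$; local trivializations can be built from those of $P\to X$ together with local sections of $X\to X/H$. The $K$-action commutes with both $H$ and $G$, so it also descends and makes $\pi$ into a $K$-equivariant principal $G$-bundle.

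For part~(2), the forward direction is short: if $\theta=q^*\bar\theta$, then $H$-invariance of $\theta$ follows from $q\circ L_h=q$ for every $h\in H$, and $\theta(\xi_P)=\bar\theta(q_*\xi_P)=0$ since $q_*\xi_P=0$ whenever $\xi\in\mathfrak{h}$.

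The converse is the main content. I would define $\bar\theta$ on $T(P/H)$ by the formula $\bar\theta(q_*v)=\theta(v)$, and the heart of the argument is well-definedness. Given $v\in T_yP$ and $v'\in T_{y'}P$ with $q_*v=q_*v'$, write $y'=h\cdot y$ for the unique $h\in H$; then $v-(L_{h^{-1}})_*v'$ lies in $\ker q_*|_y$, which, since $q$ is a principal $H$-bundle, equals $\{\xi_P|_y:\xi\in\mathfrak{h}\}$, so the difference equals some $\xi_P|_y$. The invariance hypothesis gives $\theta((L_{h^{-1}})_*v')=\theta(v')$, and the vanishing hypothesis $\theta(\xi_P)=0$ kills the remaining term, yielding $\theta(v)=\theta(v')$. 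Smoothness of $\bar\theta$ follows by pulling back along local sections of $q$. To check that $\bar\theta$ is a $G$-connection, $G$-equivariance is inherited from $\theta$ via the fact that the $G$-action on $P/H$ is induced from the one on $P$, and verticality uses $q_*\eta_P=\eta_{P/H}$ for $\eta\in\mathfrak{g}$, giving $\bar\theta(\eta_{P/H})=\theta(\eta_P)=\eta$. The main obstacle I anticipate is the identification $\ker q_*|_y=\{\xi_P|_y:\xi\in\mathfrak{h}\}$, which is the step that secures well-definedness and which I expect to follow from the standard principal-bundle structure of $q$ once freeness (and implicit properness) have been established.
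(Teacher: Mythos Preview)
Your proposal is correct and follows essentially the same route as the paper: both parts are handled the same way, with the descent of the connection defined by $\bar\theta(q_*v)=\theta(v)$ and well-definedness secured via the identity $v-(L_{h^{-1}})_*v'=\xi_P|_y$ together with $H$-invariance and the vanishing on $\mathfrak{h}$-fundamental fields. The only cosmetic difference is that the paper's part~(1) emphasizes freeness of the induced $G$-action on $P/H$, whereas you emphasize freeness of $H$ on $P$ and the resulting manifold structure; both ingredients are needed and both accounts are equally sketchy on the other.
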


\begin{proof}\
\begin{enumerate}
\item The surjection $P/H\to M/H$, induced from $\pi\colon P\to M$, and the right $G$-action on those
quotient spaces are well defined since the left
$H$-action commutes with the right $G$-action on
$P$, and with the projection $\pi$.

To show that $P/H\to X/H$ is a principal
$G$-bundle, it suffices to check that $G$ acts
freely on $P/H$. Indeed, if $[p]\in P/H,\ g\in G$
and $[p]\cdot g=[p]$, then this implies
$$[p\cdot g]=[p]\qquad\Rightarrow\qquad p\cdot g=h\cdot
p$$ for some $h\in H$, which implies
$$\pi(p\cdot g)=\pi(h\cdot
p)\qquad\Rightarrow\qquad\pi(p)=h\cdot\pi(p)\ .$$
But $H\circlearrowright X$ freely, and so $h=id$.
Then $p\cdot g=p$, and since $P\circlearrowleft
G$ freely, we conclude that $g=id$, as needed.

It is easy to check that the $K$-action descends
to $P/H\to X/H$, since it commutes with the $H$
and the $G$-actions.
\item First assume that $\theta=q^*(\bar\theta)$.
If $h\in H$ acts on $P$, then
$$h^*\theta=h^*(q^*\bar\theta)=(q\circ h)^*\bar\theta=
q^*\bar\theta=\theta$$ and so $\theta$ is
$H$-invariant. Also, if $\xi\in\mathfrak{h}$,
then clearly $q_*(\xi_P)=0$, and hence
$\theta(\xi_P)=(q^*\bar\theta)(\xi_P)=0$, as
needed.

Conversely, assume that $\theta$ is $H$-invariant
and that $\theta(\xi_P)=0$ for all
$\xi\in\mathfrak{h}$. For any $v\in TP$ define
$$\bar\theta(q_*v)=\theta(v)\ .$$
This is well defined: If $v\in T_yP$ and $v'\in
T_{y'}P$ such that $q_*(v)=q_*(v')$, then
$y'=h\cdot y$ for some $h\in H$, and we get that
$$ \theta_{y'}(v')=\theta_{h\cdot y}(v')=h^*(\theta_y((h^{-1})_*v'))=\theta_y((h^{-1})_*v')\ .$$
Now observe that
$$q_*(v-(h^{-1})_*v')=q_*(v)-q_*(v')=0\ ,$$ and so
$v-(h^{-1})_*v'=~\xi_P|_x$ (for some
$\xi\in\mathfrak{h}$) is in the vertical bundle
of $P\to P/H$. By assumption, $\theta(\xi_P)=0$
and therefore $\theta_y(v)=\theta_{y'}(v')$, and
$\bar\theta$ is well defined.

The map $\bar\theta\colon T(P/H)\to\mathfrak g$
is a 1-form. Smoothness is implied from the
definition of the smooth structure on $P/H$. Also
$\bar\theta$ is vertical and $G$-equivariant
because $\theta$ is.
\end{enumerate}
\end{proof}

Now assume that $Z$ is an $n$-dimensional
oriented Riemannian manifold, and $S^1$ acts
freely on $Z$ by isometries. Let $P\to SOF(Z)\to
Z$ be a $G$ and  $S^1$-equivariant spin$^c$
structure on $Z$. We would like to explain how
one can get a $G$-equivariant spin$^c$ structure
on $Z/S^1$, induced from the given one on $Z$.

Denote by $\frac{\partial}{\partial\varphi}\in
Lie(S^1)\simeq i\mathbb{R}$ the generator, and by
$\left(\frac{\partial}{\partial\varphi}\right)_Z$
the corresponding vector field on $Z$. Define the
normal bundle
$$V=\left[\left(\frac{\partial}{\partial\varphi}\right)_Z\right]^\bot\subset TZ$$
and an embedding $\eta\colon SOF(V)\to SOF(Z)$ as
follows. If
$f\colon\mathbb{R}^{n-1}\xrightarrow{\simeq} V_x$
is a frame in $SOF(V)$, then
$\eta(f)\colon\mathbb{R}^{n}\xrightarrow{\simeq}
T_xZ$ will be given by $\eta(f)e_i=f(e_i)$ for
$i=1,\dots,n-1$, and $\eta(f)e_n$ is the unit
vector in the direction of
$\left(\frac{\partial}{\partial\varphi}\right)_{Z,x}$
.

$$
\begin{CD}
\eta^*(P) @>\eta'>> P \\
@VVV @VVV \\
SOF(V) @>\eta>> SOF(Z)\\
@VVV @VVV\\
Z @= Z\\
\end{CD}
$$\\

To get a spin$^c$ structure on $Z/S^1$, first
consider the equivariant spin$^c$ structure on
the vector bundle $V$
$$\eta^*(P)\to SOF(V)\to Z\ .$$
Once we take the quotient by the circle action,
we get \emph{the quotient} spin$^c$ structure on
$Z/S^1$, denoted by $\bar P$:
$$\bar P=\eta^*(P)/S^1\ \to\  SOF(V)/S^1\cong
SOF(Z/S^1)\ \to\  Z/S^1\ \ .$$

If an $S^1$ and $Spin^c(m)$-invariant connection
1-form $\theta$ is given on the principal circle
bundle $P\to SOF(Z)$, then $(\eta')^*\theta$ is a
connection 1-form on the principal  circle bundle
$\eta^*(P)\to SOF(V)$.

The previous claim tells us exactly when the
above connection 1-form will descend to a
connection 1-form on the quotient bundle $\bar
P\to SOF(Z/S^1)$. The following proposition
summarizes the above construction and relates it
to spin$^c$ prequantization.

\begin{prop}\label{main-prop}
Assume that the following data is given:
\begin{enumerate}
\item An $n$-dimensional Riemannian oriented
manifold $Z$.
\item A real closed 2-form $\omega$ on $Z$.
\item Actions of  a Lie group $G$ and $S^1$ on $Z$, by orientation preserving and $\omega$-invariant isometries.
\item A $G$ and $S^1$-equivariant spin$^c$
prequantization $(P,\theta)$ on $Z$. Assume that
the actions of $G$ and $S^1$ on $P$ and $Z$
commute with each other.\\ Also assume that the
action $S^1\circlearrowright Z$ is
free.\vspace{10pt}
\end{enumerate}

 Then, using the above notation, we have
that:
\begin{enumerate}
\item $\theta'=(\eta')^*\theta$ is a connection
1-form on the  principal circle bundle
$\pi\colon\eta^*(P)\to SOF(V)$, satisfying
$$d\theta'=\pi^*(-i\cdot\omega)\ ,$$
and $$\theta'(\zeta_{\eta^*(P)})=0\mbox{\quad for
all \quad}\zeta\in\mathfrak{spin}(m-1)\ .$$
\item If
$\left(\frac{\partial}{\partial\varphi}\right)_{\eta^*(P)}$
is the vector field generated by the action
$S^1\circlearrowright\eta^*(P)$, and $q\colon
\eta^*(P)\to\bar P=\eta^*(P)/S^1$ is the quotient
map, then $\theta'=q^*(\bar\theta)$ for some
connection 1-form $\bar\theta$ on $\bar P\to
SOF(Z/S^1)$ if and only if
$$\theta'\left[\left(\frac{\partial}{\partial\varphi}\right)_{\eta^*(P)}\right]=0\ .$$
Moreover, in this case, $(\bar P,\bar\theta)$ is
a $G$-equivariant spin$^c$ prequantization for
$G\circlearrowright (Z/S^1,\bar\omega)$ (where
$\omega=q^*(\bar\omega)$).
\end{enumerate}

\end{prop}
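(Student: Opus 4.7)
The plan is to read the proposition as two separate applications of machinery already built: part (1) is a pullback-and-verify computation in the spirit of Claim \ref{prequan. for Z}, and part (2) follows directly from Claim \ref{quotient conn.} together with a short check that the descended object is itself a spin$^c$ prequantization.

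For part (1), $\theta' = (\eta')^*\theta$ is a connection 1-form on $\eta^*(P) \to SOF(V)$ because $\eta'$ is a principal $U(1)$-bundle map covering $\eta$, so verticality and $U(1)$-equivariance pull back. The curvature identity is a one-line chase, $d\theta' = (\eta')^* d\theta = (\eta')^* \pi_P^*(-i\cdot\omega) = \pi^*(-i\cdot\omega)$, using that $\eta^*(P) \to SOF(V) \to Z$ and $\eta^*(P) \to P \to SOF(Z) \to Z$ agree as maps to $Z$. For the spin-vanishing, I embed $\mathfrak{spin}(n-1) \hookrightarrow \mathfrak{spin}(n)$ in the standard way; then $\zeta_{\eta^*(P)}$ is $\eta'$-related to $\zeta_P$, so $\theta'(\zeta_{\eta^*(P)}) = \theta(\zeta_P) = 0$.

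For part (2), I apply Claim \ref{quotient conn.} with $H = S^1$, $K = G$, structure group $U(1)$, and principal bundle $\eta^*(P) \to SOF(V)$. The hypotheses are all inherited: $S^1$ acts freely on $SOF(V)$ (free action on $Z$ plus $S^1$-invariance of the rank-$(n-1)$ subbundle $V\subset TZ$), the $S^1$- and $G$-actions on $\eta^*(P)$ commute, and $\theta'$ is $S^1$-invariant. The claim then gives exactly the stated iff criterion. For the ``moreover'' clause, I first note that the descent condition $\theta'[(\partial/\partial\varphi)_{\eta^*(P)}]=0$ combined with Remark \ref{moment map} forces the moment map component $\Phi^{\partial/\partial\varphi}$ to vanish identically, whence $\iota_{(\partial/\partial\varphi)_Z}\omega = d\Phi^{\partial/\partial\varphi}=0$; together with $S^1$-invariance this makes $\omega$ basic and defines $\bar\omega$ on $Z/S^1$. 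The bundle $\bar P$ is a $G$-equivariant spin$^c$ structure on $Z/S^1$ via the identification $SOF(V)/S^1\cong SOF(Z/S^1)$; the curvature identity $d\bar\theta=\bar\pi^*(-i\cdot\bar\omega)$ descends by injectivity of $q^*$ on basic forms, and the spin-vanishing $\bar\theta(\zeta_{\bar P})=0$ follows from $q_*\zeta_{\eta^*(P)}=\zeta_{\bar P}$.

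The step that will require the most care is the geometric identification $SOF(V)/S^1\cong SOF(Z/S^1)$, which is what allows the descended bundle to legitimately be called a spin$^c$ structure on the quotient. The symplectic-reduction-style descent of $\omega$ itself, once $\iota_{(\partial/\partial\varphi)_Z}\omega=0$ and $S^1$-invariance are in hand, is by contrast a standard Cartan-calculus one-liner.
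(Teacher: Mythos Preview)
Your proposal is correct and follows essentially the same route as the paper: part (1) is the same two-line pullback computation, and part (2) is the same direct invocation of Claim \ref{quotient conn.} followed by the curvature descent via injectivity of $q^*$. The paper's proof is terser and does not spell out why $\omega$ itself descends to $\bar\omega$ on $Z/S^1$; your use of Remark \ref{moment map} to force $\Phi^{\partial/\partial\varphi}\equiv 0$ and hence $\iota_{(\partial/\partial\varphi)_Z}\omega=0$ is a genuine (and correct) addition that fills a small gap the paper leaves implicit, and your explicit check of $\bar\theta(\zeta_{\bar P})=0$ is likewise something the paper's proof skips.
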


\begin{proof}\ \\
\begin{enumerate}
\item We have
$$d\theta'=(\eta')^*d\theta=(\eta')^*\circ\pi^*(-i\cdot\omega)=
\pi^*(-i\cdot\omega)$$ and
$$\theta'(\zeta_{\eta^*(P)})=\theta(\zeta_P)=0 $$

as needed.
\item The fact that $\theta'=q^*(\bar\theta)$ if
and only if
$$\theta'\left[\left(\frac{\partial}{\partial\varphi}\right)_{\eta^*(P)}\right]=0$$
follows directly from Claim \ref{quotient conn.},
since $\theta'$ is $S^1$-invariant, and
$\frac{\partial}{\partial\varphi}$ is a
generator.

Finally, $(\bar P,\bar\theta)$ is a
prequantization, since

$$q^*(d{\bar\theta})=d\theta'=
\pi^*(-i\cdot\omega)=q^*\bar\pi^*(-i\cdot\bar\omega)
\quad\Rightarrow\quad
d{\bar\theta}=\bar\pi^*(-i\cdot\bar\omega)$$
where $\bar\pi\colon \eta^*(P)/S^1\to Z/S^1$ is
the projection. Clearly, since all our objects
are $G$-invariant, and all the actions commute,
$(\bar P,\bar\theta)$ is a $G$-equivariant
prequantization.
\end{enumerate}
\end{proof}

\begin{remark} \label{prequan. descends}
When the condition in part (2) of the above
proposition holds, we will say that \emph{the
prequantization $(P,\theta)$ for
$G\circlearrowright (Z,\omega)$ descends to the
prequantization $(\bar P,\bar\theta)$ for
$G\circlearrowright(Z/S^1,\bar\omega)$}.
\end{remark}

\subsection{The cutting of a prequantization} \
\\
In \cite{L}, Lerman describes a cutting
construction for symplectic manifolds
$(M,\omega)$, endowed with a Hamiltonian circle
action and a moment map $\Phi\colon M\to
\mathfrak{u}(1)^*$, which goes as follows. If
$\omega_\mathbb{C}=-i\cdot dz\wedge d\bar z$,
then $(M\times\mathbb
C,\omega\oplus\omega_\mathbb C)$ is a symplectic
manifold. The action $$S^1\times(M\times\mathbb
C)\to M\times\mathbb
C\qquad,\qquad(a,(m,z))\mapsto (a\cdot
m,a^{-1}\cdot z)$$ is Hamiltonian with moment map
$\tilde\Phi(m,z)=\Phi(m)-|z|^2$.

 If
$\alpha\in\mathfrak{u}(1)^*$ and $S^1$ acts
freely on $Z=\Phi^{-1}(\alpha)$, then $\alpha$ is
a regular value of $\tilde\Phi$, and the
(positive) cut space is defined by
$$M_{cut}^+=\tilde\Phi^{-1}(\alpha)/S^1=\left\{(m,z)\in
M\times\mathbb{C}:\Phi(m)-|z|^2=\alpha\right\}\
.$$ This is a symplectic manifold, with the
symplectic form $\omega_{cut}^+$ obtained by
reduction, and $S^1$ acts on $M_{cut}^+$ by
$a\cdot [m,z]=[a\cdot m,z]$. If $M$ is also
Riemannian oriented manifold, so is the cut space
(but the natural inclusion
$M_{cut}^+\hookrightarrow M$ is not an isometry).\\

\noindent Assume that the following is given:
\begin{enumerate}
\item An $m$ dimensional oriented Riemannian
manifold.
\item A closed real two-form $\omega$ on $M$.
\item An action of $S^1$ on
$M$ by $\omega$-invariant isometries.
\item An $S^1$-equivariant spin$^c$
prequantization $(P,\theta)=(P_M,\theta_M)$ for $(M,\omega)$.\\
\end{enumerate}

Recall that the action
$S^1\circlearrowright(M,\omega)$ is Hamiltonian,
with moment map $\Phi\colon
M\to\mathfrak{u}(1)^*$ determined by the equation
$$\pi^*(\Phi^\xi)=-i\cdot\iota_{\xi_P}(\theta)\qquad,\qquad\xi\in\mathfrak{u}(1)$$
where $\pi\colon P\to M$ is the projection,
 and
$\xi_P$ is the vector field on $P$ generated by
the $S^1$-action (see Remark \ref{moment map}).

We want to cut the given spin$^c$
prequantization. For that we choose
$\alpha\in\mathfrak u(1)^*$ and set
$Z=\Phi^{-1}(\alpha)$. We assume that $S^1$ acts
on $Z$ freely, and that $\alpha$ is a regular
value of $\Phi$ (however, we do not assume that
$\omega$ is nondegenerate). Our goal is to get a
condition on $\alpha$ such that cutting along
$Z=\Phi^{-1}(\alpha)$ is possible (i.e., such
that a spin$^c$ prequantization on the cut space
is obtained).\\

We proceed according to the following steps.
\begin{description}
\item[Step 1] Let $S^1$ act on the complex plane
via $$(a,z)\mapsto a^{-1}\cdot z\qquad,\qquad
a\in S^1,\ z\in\mathbb C\ .$$ This action
preserves the standard Riemannian structure and
orientation, and the two form $\omega_\mathbb
C=-i\cdot dz\wedge d\bar z$ .

Fix an odd integer $\ell$, and consider the
$S^1$-equivariant spin$^c$ prequantization
$(P_\mathbb C^\ell,\tilde\theta_\mathbb C)$ for
$S^1\circlearrowright (\mathbb C,\omega_\mathbb
C)$ defined in \S\ref{prequan. for C}.\\

\item[Step 2] Using Claim \ref{prequan. for
products} we obtain an $S^1$-equivariant spin$^c$
prequantization $(P_{M\times\mathbb
C},\theta_{M\times\mathbb C})$ for
$S^1\circlearrowright ({M\times\mathbb
C},\omega\oplus\omega_{\mathbb C})$.\\

\item[Step 3] Denote $$\tilde
Z=\left\{(m,z):\Phi(m)-|z|^2=\alpha\right\}\subset
M\times\mathbb{C}\ .$$ This is an $S^1$-invariant
submanifold of codimension 1. By Claim
\ref{prequan. for Z}, we get an $S^1$-equivariant
spin$^c$ prequantization $(P_{\tilde
Z},\theta_{\tilde Z})$ for $(\tilde
Z,\omega_{\tilde Z})$, where $\omega_{\tilde Z}$
is the restriction of $\omega\oplus\omega_\mathbb
C$ to $\tilde Z$.\\

\item[Step 4] By Remark \ref{remark product}, the
pair $(P_{\tilde Z},\theta_{\tilde Z})$ is an
$S^1$-equivariant prequantization with respect to
both the anti-diagonal and the `M-action' (in
which $S^1$ acts on the $M$ component via the
given action, and on the $\mathbb C$ component
trivially).

Using the terminology introduced in Remark
\ref{prequan. descends}, we state our main
theorem, which enable us to complete the process
and get an equivariant prequantization on the
(positive) cut space.
\end{description}

\begin{theorem} \label{main-thm}
The $S^1$-equivariant spin$^c$ prequantization
$(P_{\tilde Z},\theta_{\tilde Z})$ descends to an
$S^1$-equivatiant spin$^c$ prequantization on
$(\tilde Z/S^1=M_{cut}^+\,,\,\omega_{cut}^+)$ if
and only if
$$\alpha=\frac{\ell}{2}\in\mathfrak{u}(1)^*=\mathbb{R}$$
\end{theorem}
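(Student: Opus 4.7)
The plan is to apply Proposition \ref{main-prop}(2) with the quotienting circle being the diagonal $S^1$-action on $M\times\mathbb C$ (under which $\tilde Z$ is a principal $S^1$-bundle over $M_{cut}^+$), so that the residual $G$-action is the M-action. By that proposition, $(P_{\tilde Z},\theta_{\tilde Z})$ descends if and only if the pullback of $\theta_{\tilde Z}$ to $\eta^*(P_{\tilde Z})$ vanishes on the generator of the diagonal action. Since $\eta'$ intertwines the $S^1$-actions, this is equivalent to $\theta_{\tilde Z}(\xi_{P_{\tilde Z}})=0$ at the points of $P_{\tilde Z}$ lying over $SOF(V)$; and by the prequantization identity of Remark \ref{moment map}, $\theta_{\tilde Z}(\xi_{P_{\tilde Z}})=i\cdot\tilde\Phi^\xi\circ\pi$, where $\tilde\Phi\colon M\times\mathbb C\to\mathfrak u(1)^*$ is the moment map associated to the product prequantization $(P_{M\times\mathbb C},\theta_{M\times\mathbb C})$ and $\pi\colon P_{\tilde Z}\to\tilde Z$ is the projection. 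This function factors through $\tilde Z$ and $\eta^*(P_{\tilde Z})\to\tilde Z$ is surjective, so the descent condition is equivalent to $\tilde\Phi|_{\tilde Z}\equiv 0$.

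Next I would compute $\tilde\Phi$ explicitly. Moment maps are additive under the product construction of Claim \ref{prequan. for products}: the generator of the diagonal $S^1$-action on $P_{M\times\mathbb C}$ is the image under $q_*$ of the pair $(\xi_{P_M},\xi_{P_\mathbb C^\ell})$, with zero contribution from the $Spin^c(m+2)$-factor, so plugging into the explicit formula of Remark \ref{remark product} yields $\tilde\Phi(m,z)=\Phi(m)+\Phi_{\mathbb C}(z)$. It then remains to evaluate $\Phi_{\mathbb C}$ from $(P_{\mathbb C}^\ell,\tilde\theta_{\mathbb C})$. At $(z,[a,w])$ the generator of the $S^1$-action on $P_{\mathbb C}^\ell$ has $\mathbb C$-component equal to $-iz$ and $Spin^c(2)$-component coming from left translation by the one-parameter subgroup $\varphi\mapsto[x_{-\varphi/2},e^{-i\ell\varphi/2}]$, on which $\theta^R$ returns $-\frac{1}{2}e_1 e_2-\frac{i\ell}{2}\in\mathfrak{spin}^c(2)$. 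The map $\frac{1}{2}det_*$ annihilates $\mathfrak{spin}(2)$ and is the identity on $\mathfrak u(1)$, contributing $-i\ell/2$; the remaining piece $\frac{1}{2}\pi_1^*(\bar z\,dz-z\,d\bar z)$ evaluated on $-iz$ gives $-i|z|^2$. Summing, $\tilde\theta_{\mathbb C}(\xi_{P_{\mathbb C}^\ell})=-\frac{i\ell}{2}-i|z|^2$, so $\Phi_{\mathbb C}(z)=-\frac{\ell}{2}-|z|^2$.

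Putting the pieces together gives $\tilde\Phi(m,z)=\Phi(m)-\frac{\ell}{2}-|z|^2$, which on $\tilde Z=\{\Phi(m)-|z|^2=\alpha\}$ is the constant $\alpha-\frac{\ell}{2}$. The descent condition $\tilde\Phi|_{\tilde Z}\equiv 0$ is therefore equivalent to $\alpha=\frac{\ell}{2}$, as claimed. The main potential obstacle is keeping track of the $\ell$-twist through the computation of $\Phi_{\mathbb C}$ (the integer $\ell$ enters only via the $S^1$-action on $P_{\mathbb C}^\ell$, since $\theta_{\mathbb C}$ itself is $\ell$-independent) and navigating the identifications $\mathfrak u(1)\cong i\mathbb R$ and $\mathfrak u(1)^*\cong\mathbb R$ consistently; a sign slip at any stage would shift the answer by a constant.
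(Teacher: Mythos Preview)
Your proof is correct and follows essentially the same route as the paper: both reduce via Proposition~\ref{main-prop}(2) to the vanishing of $\theta_{\tilde Z}$ on the generating vector field of the quotienting $S^1$-action, then split this using the product formula into an $M$-piece (which the moment-map identity turns into $i\,\Phi$) and a $\mathbb C$-piece (computed directly from $(P_{\mathbb C}^\ell,\tilde\theta_{\mathbb C})$ to be $-i(|z|^2+\ell/2)$), and finally use the defining equation of $\tilde Z$ to obtain $\alpha=\ell/2$. Your only cosmetic difference is that you package the splitting as ``moment maps add under products'' and then restrict to $\tilde Z$, whereas the paper keeps the computation at the level of the connection $1$-forms throughout; the underlying calculation is identical.
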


\begin{proof}
By Proposition \ref{main-prop}, $(P_{\tilde
Z},\theta_{\tilde Z})$ will descend to a
prequantization on the cut space, if and only if
$$ \theta_{\tilde
Z}'\left[\left(\frac{\partial}{\partial\varphi}\right)_{\eta^*(P_{\tilde
Z})}\right]=0\ .$$ This is the same as requiring
that $\theta_{\tilde Z}$, when restricted to
$\eta^*(P_{\tilde Z})$, vanishes:
$$\theta_{\tilde Z}\left.\left[\left(\frac{\partial}{\partial\varphi}\right)_{P_{\tilde
Z}}\right]\right|_{\eta^*(P)}=0\ ,$$ which is
equivalent to
$$\theta_{M\times\mathbb
C}\left[\left(\frac{\partial}{\partial\varphi}\right)_{P_{M\times\mathbb
C}}\right]=0\ \ \text{on}\ \ \eta^*(P_{\tilde
Z})\ .$$ Now using the formula for
$\theta_{M\times\mathbb{C}}$, we get that
$${\theta_M}\left(\left(\frac{\partial}{\partial\varphi}\right)_{P_M}\right)+
{\theta_\mathbb
C}\left(\left(\frac{\partial}{\partial\varphi}\right)_{P_\mathbb
C}\right)=0$$ It is not hard to show that at a
point $(z,[A,w])\in P^\ell_\mathbb C=\mathbb
C\times Spin^c(2)$, we have
$$\quad\left(\frac{\partial}{\partial\varphi}\right)_{P^\ell_\mathbb
C}= i\cdot\left[\bar
z\frac{\partial}{\partial\bar
z}-z\frac{\partial}{\partial
z}\right]+\nu|_{[A,w]}$$ where $\nu|_{[A,w]}$ is
the vector field on $Spin^c(2)$ generated by the
element
$$\nu=-\frac{1}{2}\,e_1e_2-\frac{i\cdot \ell}{2}\in\mathfrak{spin}^c(2)\ .$$

Therefore one computes that
\begin{equation*}
\theta_\mathbb
C\left(\left(\frac{\partial}{\partial\varphi}\right)_{P^\ell_\mathbb
C}\right)=-i\cdot\left(|z|^2+\frac{\ell}{2}\right)
\end{equation*}

On the other hand, by the condition defining our
moment map, we have that
$$ {\theta_M}\left(\left(\frac{\partial}{\partial\varphi}\right)_{P_M}\right)
=i\cdot\pi^*\left(\Phi^{{\partial}/{\partial\varphi}}\right)$$
where $\pi\colon P\to M$ is the projection.

Combining the above we see that $(P_{\tilde
Z},\theta_{\tilde Z})$ descends to an
$S^1$-equivatiant spin$^c$ prequantization on
$(\tilde Z/S^1=M_{cut}^+\,,\,\omega_{cut}^+)$ if
and only if (on $\eta^*(P_{\tilde Z})$):
$$\pi^*\left(\Phi^{{\partial}/{\partial\varphi}}\right)-
|z|^2-\frac{\ell}{2}=0\ .$$ But on the manifold
$\tilde Z$ we have $\Phi(m)-|z|^2=\alpha$. and
hence the last equality is equivalent to
$$\alpha-\frac{\ell}{2}=0\
,$$ as needed.
\end{proof}

\begin{remark}
We can also construct a spin$^c$ prequantization
for the negative cut space
$(M_{cut}^-,\omega_{cut}^-)$ as follows. Recall
that $M_{cut}^-$ is defined as the quotient
$$\left\{(m,z)\in M\times\mathbb
C:\Phi(m)+|z|^2=\alpha\right\}/S^1\ ,$$ where the
$S^1$-action on $M\times\mathbb C$ is taken to be
the diagonal action, and $\omega_{cut}^-$ is
defined as before by reduction. The two form on
$\mathbb C$ is taken to be $i\,dz\wedge d\bar z$,
and the spin$^c$ prequantization for $\mathbb C$
is defined using the connection $$\theta_\mathbb
C=\pi^*(\theta^R)-\frac{1}{2}(\bar zdz-zd\bar z)\
.$$The $S^1$-action on $P_\mathbb C^\ell$ will be
given by
$$S^1\times P_\mathbb C^\ell\to P_\mathbb C^\ell\qquad,\qquad(e^{i\varphi}z,(z,[a,w]))\mapsto
(e^{i\varphi},[x_{\varphi/2}\cdot
a,e^{-i\ell\varphi/2}\cdot w])$$ (see
\S\ref{prequan. for C}).

 Other than that, the
construction is carried out as for the positive
cut space, and we can prove a theorem that will
assert that $\alpha=\ell/2$, if the cutting is to
be done along the level set $\Phi^{-1}(\alpha)$
of the moment map.
\end{remark}

\section{An example - The two sphere}\label{Sec-Ex}
In this section we discuss in detail spin$^c$
prequantizations and cutting for the two-sphere.

\subsection{Prequantizations for the two-sphere}
The two-sphere will be thought of as a
submanifold of $\mathbb
R^3$:$$S^2=\{(x,y,z)\in\mathbb
R^3:x^2+y^2+z^2=1\} $$ with the outward
orientation and natural Riemannian structure
induced from the inner product in $\mathbb R^3$.
Fix a real number $c$, and let $\omega=c\cdot A$,
where A is the area form on the two-sphere
$$A=j^*(x\,dy\wedge dz+y\,dz\wedge dx+z\, dx\wedge
dy)\ ,$$ and where $j\colon
S^2\hookrightarrow\mathbb R^3$ is the inclusion.
Note that $\omega$ is a symplectic form if and
only if $c\ne 0$.

For any real $\varphi$ define
\[C_\varphi=\left(
                    \begin{array}{ccc}
                    \cos\varphi & -\sin\varphi & 0\\
                    \sin\varphi & \cos\varphi  & 0\\
                    0          &     0       & 1
                    \end{array}
\right)\ ,\] and let $S^1$ act on $S^2$ via
rotations around the $z$-axis,
i.e.,$$(e^{i\varphi},v)\mapsto C_\varphi\cdot
v\qquad,\qquad v\in S^2\ .$$

In Section 7 of \cite{SF1}, we constructed all
$S^1$-equivariant spin$^c$-structures over the
$S^1$-manifold $S^2$ (up to equivalence). Let us
review the main ingredients here.

First, the \emph{trivial spin$^c$ structure}
$P_0$ is given by the following diagram.

$$
\begin{CD}
S^1\times Spin^c(3)       @>>>     P_0=Spin^c(3)    @<<<     Spin^c(3)\times Spin^c(2)\\
@VVV                    @V\Lambda VV          @VVV\\
S^1\times SO(3)       @>>>     SO(3)    @<<<     SO(3)\times SO(2)\\
@VVV                   @V\pi VV           @.\\
S^1\times S^2       @>>>     S^2    @.\\
\end{CD}
$$\\
In this diagram we use the fact that the frame
bundle of $S^2$ is isomorphic to $SO(3)$. The
projection $\pi$ is given by $$A\mapsto A\cdot
x$$ where $x=(0,0,1)$ is the north pole, and the
map $\Lambda$ is the obvious one.

The horizontal maps describe the $S^1$ and the
principal actions: $S^1$ and $SO(2)$ act on
$SO(3)$ by left and right multiplication by
$C_\varphi$, respectively. The principal action
of $Spin^c(2)$ on $Spin^c(3)$ is just right
multiplication, and the $S^1$ action on
$Spin^c(3)$ is given by
$$(e^{i\varphi},[A,z])\mapsto[x_{\varphi/2}\cdot
A\;,\;e^{i\varphi}\cdot z]$$ where
$x_{\varphi/2}=\cos\varphi+\sin\varphi\cdot
e_1e_2\in Spin(3)$ . We can turn this spin$^c$
structure into a spin$^c$ prequantization as
follows. Let $\omega_0=0$ the zero two form on
$S^2$, and consider the 1-form
$$\theta_0=\frac{1}{2}\;det_*\circ\theta^R\colon
TSpin^c(3)\to \mathfrak u(1)=i\mathbb R$$ where
$\theta^R$ is the right-invariant Maurer-Cartan
form on $Spin^c(3)$ and the map $det$ was defined
in \S\ref{def_of_preq}. Clearly, $(P_0,\theta_0)$
is an $S^1$-equivariant spin$^c$ prequantization
for $(S^2,\omega_0)$.

Next, we construct all $S^1$-equivariant line
bundles over $S^2$.

\begin{claim}
Given a pair of integers $(k,n)$, define an
$S^1$-equivariant complex Hermitian line bundle
$L_{k,n}$ as follows:
\begin{enumerate}
\item As a complex line bundle, $$L_{k,n}=S^3\times_{S^1}\mathbb{C}\ ,$$ where
$S^1$ acts on $\mathbb{C}$ with weight $n$ and on
$S^3\subset\mathbb C^2$ by
$$S^1\times S^3\to S^3\qquad,\qquad(a,(z,w))\mapsto (az,aw)\ .$$
\item The circle group $S^1$ acts on $L_{k,n}$ by
$$ S^1\times L_{k,n}\to L_{k,n}\qquad,\qquad
\left(e^{i\varphi},[(z,w),u]\right)\mapsto
[(e^{i\varphi/2}z,e^{-i\varphi/2}w),e^{i(n+2k)\varphi/2}\cdot
u]\ .$$
\end{enumerate}
Then every equivariant line bundle over $S^2$ is
equivariantly isomorphic to $L_{k,n}$ for some
integers $k,n$.
\end{claim}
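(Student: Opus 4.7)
The plan is to decompose the classification into two independent pieces: the underlying complex line bundle (determined by its degree), and the equivariant lift (determined, once the underlying bundle is fixed, by a character of $S^1$).

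First, I would invoke the classical classification of complex Hermitian line bundles on $S^2$ via the first Chern class: line bundles on $S^2$ are classified up to isomorphism by $H^2(S^2;\mathbb Z)\cong\mathbb Z$. The underlying line bundle of $L_{0,n}=S^3\times_{S^1}\mathbb C_n$ is the line bundle associated to the Hopf principal $S^1$-bundle $S^3\to S^2$ via the weight-$n$ character of $S^1$, and has Chern number $n$ (with a suitable sign convention). Hence, given any $S^1$-equivariant Hermitian line bundle $L\to S^2$, there exists an ordinary (non-equivariant) bundle isomorphism $\phi\colon L\to L_{0,n}$ for some $n\in\mathbb Z$; transporting the $S^1$-action on $L$ through $\phi$ yields an $S^1$-lift of the rotation action to $L_{0,n}$, which I would then compare to the canonical equivariant structure carried by $L_{0,n}$.

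Second, I would show that any two $S^1$-lifts of the rotation action on $S^2$ to the same Hermitian line bundle differ by tensoring with a character $\chi_k\colon e^{i\varphi}\mapsto e^{ik\varphi}$ for some $k\in\mathbb Z$. The ratio of the two lifts is, for each $g\in S^1$, a bundle automorphism of $L_{0,n}$ covering the identity on $S^2$, hence a smooth function $f_g\colon S^2\to U(1)$; the group-law condition makes $g\mapsto f_g$ a crossed homomorphism, and $S^1$-invariance together with evaluation at either of the two fixed points of the rotation action (the north and south poles) forces $f_g$ to be the constant $e^{ik\varphi}$ for some integer $k$. Finally, a direct calculation using the formula in the claim confirms that $L_{0,n}$ with its canonical equivariant structure tensored by $\chi_k$ is equivariantly isomorphic to $L_{k,n}$.

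The main obstacle is the second step: ruling out more general gauge symmetries and reducing the space of equivariant lifts on a fixed line bundle to the discrete set of characters $\{\chi_k:k\in\mathbb Z\}$. The fact that the $S^1$-action on $S^2$ has only two, isolated fixed points is crucial, since it forces an $S^1$-invariant $U(1)$-valued cocycle on $S^2$ to be constant. Once this reduction is established, identifying the $k$-twist with $L_{k,n}$ is a routine algebraic check using the action formula given in the claim.
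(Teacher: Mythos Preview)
The paper does not actually prove this claim in-text; it defers entirely to Claim~7.1 of \cite{SF1}. So there is no argument here to compare against, and I will simply comment on the soundness of your outline.

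Your two-step strategy is the right one, and step one is unproblematic. The gap is in step two. You write that evaluation at a fixed pole, together with ``$S^1$-invariance,'' forces the difference cocycle $f_g$ to be the constant function $e^{ik\varphi}$. That is not correct as stated: restricting $g\mapsto f_g(N)$ to a fixed point does give a genuine character $g\mapsto g^k$, but nothing forces $f_g$ to be constant over the rest of $S^2$, and the functions $f_g\colon S^2\to U(1)$ are not $S^1$-invariant in general (the crossed-homomorphism relation $f_{gh}(x)=f_g(x)\,f_h(g^{-1}x)$ does not imply invariance of $f_g$). What you actually need is that the cocycle $g\mapsto f_g$ is \emph{cohomologous} to the constant cocycle $g\mapsto g^k$; equivalently, that after conjugating one of the two lifts by a single gauge transformation $h\in C^\infty(S^2,U(1))$ the two lifts coincide. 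Proving this requires a genuine argument. One clean route: use simple connectivity of $S^2$ to write $C^\infty(S^2,U(1))\cong U(1)\times V$ with $V$ the Fr\'echet space of real functions vanishing at $N$, and then invoke the vanishing of $H^1_{\mathrm{smooth}}(S^1;V)$ for a compact group acting on a vector space (by averaging), leaving $H^1(S^1;U(1))\cong\mathbb Z$. Alternatively, argue geometrically by equivariantly trivializing the bundle over each closed hemisphere (each retracts $S^1$-equivariantly onto its pole) and matching the two lifts along the equator. Either route works, but neither is the one-line fixed-point observation you sketched.
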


For the proof, see Claim 7.1 in \cite{SF1} (where
slightly different notation is used).

To get all spin$^c$ structures on $S^2$, we need
to twist $P_0$ with the $U(1)$-bundle
$U(L_{k,n})$ associated to $L_{k,n}$ for some
$k,n\in\mathbb Z$. Thus define
$$P_{k,n}=P_0\times_{U(1)}U(L_{k,n})\ .$$
The principal $Spin^c(2)$-action is given coming
from the action on $P_0$, and the left
$S^1$-action in induced from the diagonal action.

We now define a connection $$\theta_{n}\colon
TP_{k,n}\to i\mathbb R$$ on the $U(1)$ bundle
$P_{k,n}\to SO(3)=SOF(S^2)$, which will not
depend on $k$, as follows:
$$\theta_{n}=\theta_0+\frac{n}{2}\left(
-\bar z\;dz+z\;d\bar z-\bar w\;dw+w\;d\bar
w\right)+u^{-1}du$$ where $(z,w)\in
S^3\subset\mathbb C^2$ are coordinates on $S^3$
and $u^{-1}du$ is the Maurer-Cartan form on the
$S^1$ component of
$U(L_{k,n})=S^3\times_{S^1}S^1$.

One can compute
$$d\theta_{n}=n(dz\wedge d\bar z+dw\wedge d\bar
w)=\pi^*(-in/2\cdot A)$$ and hence if we define
$\omega_n=\frac{n}{2}\cdot A$ then
$(P_{k,n}\,,\,\theta_n)$ is a spin$^c$
prequantization for $(S^2,\omega_n)$.

Let $P_{det}$ be the $U(1)$-bundle associated to
the determinant line bundle of a spin$^c$
structure. We proved in Section 7 of \cite{SF1},
that the determinant line bundle of any spin$^c$
structure on the two-sphere is isomorphic to
$L_{2k+1,2n}$, and hence has a square root (as a
non-equivariant line bundle). Using this fact and
the construction of $(P_{k,n}\,,\,\theta_n)$
above, we prove:
\begin{claim}\label{S^2-integral}
The $S^1$-manifold $(S^2,\omega=c\cdot A)$ is
spin$^c$-prequantizable (i.e., admits an
$S^1$-equivariant spin$^c$ prequantization) if
and only if $2c\in\mathbb Z$.
\end{claim}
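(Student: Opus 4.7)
The plan is to argue both directions through integrality of the first Chern class of the determinant line bundle. For the ``if'' direction, suppose $2c \in \mathbb Z$ and set $n = 2c$, so that $\omega = \tfrac{n}{2} A = \omega_n$. Then the explicitly constructed pair $(P_{0,n}, \theta_n)$ from the preceding discussion is, by design, an $S^1$-equivariant spin$^c$ prequantization for $(S^2, \omega)$.

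For the ``only if'' direction, suppose $(P, \theta)$ is an $S^1$-equivariant spin$^c$ prequantization for $(S^2, cA)$. The first step is to push $\theta$ down to the determinant $U(1)$-bundle $P_{det} \to S^2$: the prequantization axioms state exactly that $\theta$ is $Spin^c(2)$-invariant and vanishes on $\mathfrak{spin}(2)$-fundamental vector fields, so Claim \ref{connection on P_det} produces a connection $\bar\theta$ on $P_{det}$ with $\theta = \tfrac{1}{2}\, q^*\bar\theta$. Combining with $d\theta = \pi^*(-i\omega)$ and the factorization $\pi = \bar\pi \circ q$, I obtain
$$d\bar\theta = \bar\pi^*(-2i\omega),$$
so the curvature of $\bar\theta$ viewed downstairs is $-2i\omega$.

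The second step is Chern--Weil together with the classification. The first Chern class of $P_{det}$ is represented by $\tfrac{i}{2\pi}(-2i\omega) = \omega/\pi$, hence using $\int_{S^2} A = 4\pi$,
$$\int_{S^2} c_1(P_{det}) \;=\; \frac{1}{\pi}\int_{S^2} cA \;=\; 4c.$$
On the other hand, by the classification result cited from \cite{SF1}, the determinant line bundle of any spin$^c$ structure on $S^2$ is isomorphic to $L_{2k+1, 2n}$ for some integers $k,n$, whose first Chern number is the \emph{even} integer $\pm 2n$. Equating these two expressions forces $4c \in 2\mathbb Z$, i.e.\ $2c \in \mathbb Z$.

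The crux of the argument is this evenness: naked Chern--Weil would only give $4c \in \mathbb Z$, and it is the fact that the determinant line bundle of a spin$^c$ structure on $S^2$ always has even degree (reflecting $w_2(TS^2)=0$, concretely supplied by the classification above) that sharpens the integrality to $2c \in \mathbb Z$. Everything else in the argument is formal: the ``if'' direction is a citation of the explicit construction, and the descent to $P_{det}$ is Claim \ref{connection on P_det} applied verbatim.
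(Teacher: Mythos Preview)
Your proof is correct and follows essentially the same approach as the paper. Both directions match: the ``if'' direction cites the explicit construction $(P_{k,2c},\theta_{2c})$, and the ``only if'' direction descends $\theta$ to $P_{det}$ via Claim~\ref{connection on P_det}, reads off the curvature $-2i\omega$, and then exploits the fact that $P_{det}\cong L_{2k+1,2n}$ has even degree. The only cosmetic difference is that the paper phrases this last step as ``$P_{det}$ admits a square root, hence $[-i\omega]$ is itself a curvature class, hence $[\omega]$ is integral (Weyl's theorem)'', whereas you phrase it as ``$c_1(P_{det})=4c$ must be an even integer''; these are equivalent formulations of the same observation.
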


\begin{proof}
Assume that $(P,\theta)$ is a spin$^c$-
prequantization for $(S^2,\omega)$. Then, by
Claim~\ref{connection on P_det},
$\theta=\frac{1}{2}q^*(\overline\theta)$ for some
connection 1-form $\overline\theta$ on the
principal $U(1)$-bundle $p\colon P_{det}\to S^2$,
where $q\colon P\to P/Spin(2)=P_{det}$ is the
quotient map. Since $(P,\theta)$ is a spin$^c$
prequantization, we have $$
d\theta=\pi^*(-i\cdot\omega)\qquad\Rightarrow\qquad
q^*\left(\frac{1}{2}d\overline\theta\right)=q^*p^*(-i\cdot\omega)
\qquad\Rightarrow\qquad
\frac{1}{2}d\overline\theta=p^*(-i\cdot\omega)\
$$
which implies
$$d\overline\theta=p^*(-2i\cdot\omega)\ .$$
This means that $[-2i\cdot\omega]$ is the
curvature class of the determinant line bundle of
$P$. According to the above remark, $P_{det}$ is
a square, and hence the class
$$\frac{1}{2}\,[-2i\cdot\omega]=[-i\cdot\omega]$$
is a curvature class of a line bundle over $S^2$.
This forces $[\omega]$ to be integral (Weyl's
theorem - page 172 in \cite{Fr}), i.e.,
$$\int_{S^2}\omega\in 2\pi\mathbb Z\qquad\Rightarrow
\qquad 2c\in\mathbb Z$$ and the conclusion
follows.

Conversely, assume that $2c\in\mathbb Z$. Then,
as mentioned above, $(P_{k,2c}\,,\,\theta_{2c})$
(for any $k\in\mathbb Z$) is a spin$^c$
prequantization for $(S^2,c\cdot A)$ as needed.
\end{proof}

Let us now compute the moment map $$\Phi\colon
S^2\to\mathfrak u(1)^*=\mathbb R$$ for
$(S^2,n/2\cdot A)$ (for $n\in\mathbb Z$)
determined by the prequantization
$(P_{k,n},\theta_n)$. Recall that
$$\theta_{n}=\theta_0+\frac{n}{2}\left(
-\bar z\;dz+z\;d\bar z-\bar w\;dw+w\;d\bar
w\right)+u^{-1}du\ .$$ It is straightforward to
show that the vector field, generated by the left
$S^1$-action on $P_{k,n}$ is
$$\left(\frac{\partial}{\partial\varphi}\right)_{P_{k,n}}=
\frac{i}{2}\frac{\partial}{\partial
v}-\frac{i}{2}\left(-\bar
z\frac{\partial}{\partial\bar
z}+z\frac{\partial}{\partial z}+\bar
w\frac{\partial}{\partial\bar
w}-w\frac{\partial}{\partial
w}\right)+\frac{i}{2}(n+2k)\frac{\partial}{\partial
u}$$ where $\frac{\partial}{\partial v}$ is the
vector field on $P_0$ generated by the
$S^1$-action.

Now compute
\begin{multline*}
\theta_n\left(\left(\frac{\partial}{\partial\varphi}\right)_{P_{k,n}}\right)=
\frac{i}{2}-\frac{in}{4}\left(-\bar z z+z(-\bar
z)-\bar w w\right)+\frac{i}{2}(n+2k)=\\[5pt]
\qquad=\frac{i}{2}\left[n(|z|^2-|w|^2)+n+2k+1\right]\hfill\\
\end{multline*}
and thus $\Phi$ is given by
$$\Phi([z,w])=-i\cdot\theta_n\left(\left(\frac{\partial}{\partial\varphi}\right)_{P_{k,n}}\right)=
\frac{n}{2}\left(|z|^2-|w|^2+1\right)+k+\frac{1}{2}$$

\begin{remark}
Observe that for $[z,w]\in S^2=\mathbb CP^1$, the
quantity $|z|^2-|w|^2$ represents the third
coordinate $x_3$ (i.e., the height) on the unit
sphere (this is part of the Hopf-fibration).
Since $-1\le x_3\le 1$, we have (for $n\ge0$):
$$k+\frac{1}{2}\le\Phi\le n+k+\frac{1}{2}$$ and hence
the image of the moment map is the closed
interval
$$\left[k+\frac{1}{2}\,,\,n+k+\frac{1}{2}\right]$$
if $n\ge 0$ or
$$\left[n+k+\frac{1}{2}\,,\,k+\frac{1}{2}\right]$$
if $n\le 0$.
\end{remark}

\subsection{Cutting a prequantization on the
two-sphere}

Fix an $S^1$-equivariant spin$^c$-prequantization
$(P_{k,n},\theta_n)$ for $(S^2,\omega_n)$, where
$\omega_n=\frac{n}{2}\cdot A$ ($A$ is the area
form on the two-sphere) and $n\ne 0$.

The corresponding moment map, as computed above,
is $$\Phi\colon S^2\to\mathbb R\qquad,\qquad
\Phi([z,w])=
\frac{n}{2}\left(|z|^2-|w|^2+1\right)+k+\frac{1}{2}$$

We would like to cut this prequantization along a
level set $\Phi^{-1}(\alpha)$ of the moment map.
By Theorem \ref{main-thm} we must have
$$\alpha=\frac{\ell}{2}$$ for some odd integer
$\ell$, and the cutting has to be done using the
spin$^c$ structure $(P^\ell_\mathbb
C,\theta_\mathbb C)$ on $(\mathbb
C,\omega_\mathbb C)$ (see \S\ref{prequan. for
C}).

In \cite[Section 7]{SF1} we performed the cutting
construction for the two-sphere in the case where
$\ell=1$. In this case we showed that the
spin$^c$ structures obtained for the cut spaces
are
$$(P_{k,n})_{cut}^+=P_{0,k+n}\qquad,\qquad
(P_{k,n})_{cut}^-=P_{k,-k}\ .$$ The computations
in \cite{SF1} can be modified for an arbitrary
$\ell$ to get
$$(P_{k,n})_{cut}^+=P_{(\ell-1)/2,k+n-(\ell-1)/2}\qquad,\qquad
(P_{k,n})_{cut}^-=P_{k,-k+(\ell-1)/2}\ .$$

Recall that the cut spaces obtained in this case
are symplectomorphic to two-spheres (if $\ell/2$
is strictly between $k+\frac{1}{2}$ and
$n+k+\frac{1}{2}$). Using this identification we
have:

\begin{claim}
If the symplectic manifold $(S^2,\omega_n)$,
endowed with the Hamiltonian $S^1$-action
$$(e^{i\varphi},v)\mapsto C_\varphi\cdot v$$
and the above moment map $\Phi$ is being cut
along the level set $\Phi^{-1}(\ell/2)$, then the
reduced two-forms on the cut spaces are
$$\omega_{cut}^+=\omega_{k+n+(1-\ell)/2}\qquad and\qquad
\omega_{cut}^-=\omega_{-k+(\ell-1)/2}\ .$$ Here
we assume that $\ell/2$ is strictly between
$k+\frac{1}{2}$ and $n+k+\frac{1}{2}$.
\end{claim}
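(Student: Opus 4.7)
My plan is to identify each reduced form up to symplectomorphism by matching total symplectic volumes. Since by the remark preceding the claim the cut spaces $M_{cut}^{\pm}$ are symplectomorphic to two-spheres, and since symplectic forms on $S^2$ are classified up to symplectomorphism by their total integral (Moser's theorem), it is enough to verify
$$\int_{M_{cut}^+}\omega_{cut}^+ \;=\; \int_{S^2}\omega_{k+n+(1-\ell)/2} \;=\; 2\pi\bigl(k+n+(1-\ell)/2\bigr)$$
together with the analogous identity $\int_{M_{cut}^-}\omega_{cut}^- = 2\pi\bigl(-k+(\ell-1)/2\bigr)$.

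For the positive cut space, the standard fact from Lerman's construction is that the open subset $\{z\neq 0\}\subset M_{cut}^+$ is symplectomorphic to the open cap $\Phi^{-1}\bigl((\ell/2,\infty)\bigr)\subset S^2$ carrying the restriction of $\omega_n$, while the complement is the single collapsed point $\Phi^{-1}(\ell/2)/S^1$. This follows from Marsden--Weinstein reduction, since the map $m\mapsto[m,\sqrt{\Phi(m)-\ell/2}]$ is a section of the cutting projection along which $\omega_{\mathbb{C}}$ pulls back to zero (because $z$ is forced to be real along the section). Thus $\int_{M_{cut}^+}\omega_{cut}^+ = \int_{\Phi^{-1}((\ell/2,\infty))}\omega_n$, and to evaluate the right-hand side I would invoke the Duistermaat--Heckman theorem for the Hamiltonian $S^1$-action on $(S^2,\omega_n)$. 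The action has exactly two fixed points (the poles), the moment map image was computed above to be the interval $[k+\tfrac{1}{2},\,n+k+\tfrac{1}{2}]$ of length $n$, and $\int_{S^2}\omega_n = 2\pi n$; hence $\Phi_*\omega_n$ has constant density $2\pi$ on this interval, giving
$$\int_{\Phi^{-1}((\ell/2,\,n+k+1/2))}\omega_n \;=\; 2\pi\bigl(n+k+\tfrac{1}{2}-\tfrac{\ell}{2}\bigr) \;=\; 2\pi\bigl(k+n+(1-\ell)/2\bigr),$$
as required (taking $n>0$ for concreteness; the case $n<0$ is analogous with the roles of the positive and negative cut spaces interchanged).

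An entirely parallel computation for the negative cut space, using the complementary subinterval $(k+\tfrac{1}{2},\,\ell/2)$ of length $(\ell-1)/2-k$, yields the required volume $2\pi\bigl(-k+(\ell-1)/2\bigr)$, matching $\omega_{-k+(\ell-1)/2}$. The main step requiring attention is the $\omega$-preserving identification of the open part of the cut space with $\Phi^{-1}((\ell/2,\infty))$; once this is in place, the remainder reduces to the Duistermaat--Heckman volume computation on $S^2$, which is entirely routine. The hypothesis that $\ell/2$ lies strictly between $k+\tfrac{1}{2}$ and $n+k+\tfrac{1}{2}$ is what guarantees both that $\ell/2$ is a regular value of $\Phi$ and that both subintervals have positive length, so that neither cut space is degenerate.
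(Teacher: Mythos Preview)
Your argument is correct and proves the claim, but by a genuinely different route from the paper. The paper proceeds by writing down, in cylindrical coordinates $(\phi,h)$ on $S^2$ and polar coordinates $u=re^{-i\alpha}$ on $\mathbb C$, an explicit quotient map
\[
p\colon \tilde Z\to S^2,\qquad ((\phi,h),u)\mapsto\Bigl(\phi+\alpha,\ \tfrac{2n}{2n+2k+1-\ell}(h-1)+1\Bigr),
\]
and then directly computes that $p^*\omega_{k+n+(1-\ell)/2}$ agrees with the restriction of $\omega_n\oplus\omega_{\mathbb C}$ to $\tilde Z$. You instead invoke Moser's theorem to reduce the question to a volume computation, identify the open dense piece of $M_{cut}^+$ with $\Phi^{-1}((\ell/2,\infty))$ via Lerman's section, and read off the volume from the Duistermaat--Heckman density $2\pi$ on the moment interval.

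Each approach has its advantages. The paper's computation pins down the \emph{specific} diffeomorphism under which the reduced form equals $\omega_{k+n+(1-\ell)/2}$, which is convenient for the subsequent comparison of connections (where one wants $d\theta^+=d\theta_{k+n+(1-\ell)/2}$ on the nose). Your argument is cleaner and coordinate-free, and makes transparent why only the value of $\ell$ relative to the moment interval matters; the price is that Moser furnishes only \emph{some} symplectomorphism, so strictly speaking you have shown $(M_{cut}^+,\omega_{cut}^+)\cong(S^2,\omega_{k+n+(1-\ell)/2})$ rather than equality under a fixed identification. For the use made of the claim later in the paper this distinction is harmless, but it is worth flagging.
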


\begin{proof}
Let us concentrate on the positive cut space. We
will use cylindrical coordinates $(\phi,h)$ to
describe the point
$$(x,y,z)=(\sqrt{1-h^2}\cos\phi,\sqrt{1-h^2}\sin\phi,h)$$
on the unit sphere $S^2$. The positive cut space
is obtained by reduction. The relevant diagram is
$$\begin{CD}
\tilde Z @>i>> S^2\times\mathbb C\\
@V p VV\\
\tilde Z/S^1\cong S^2
\end{CD}$$
Recall that $$\tilde Z=\left\{((\phi,h),u)\in
S^2\times\mathbb
C:\Phi(\phi,h)-|u|^2=\ell/2\right\}$$ and that
the two-form on $S^2\times\mathbb C$ is
$$\omega_n+\omega_\mathbb C=\frac{n}{2}\cdot A
-i\,du\wedge d\bar u\ .$$ The map $p$ is given by
$$((\phi,h),u=r\,e^{-i\alpha})\mapsto
\left(\phi+\alpha\,,\,\frac{2n}{2n+2k+1-\ell}(h-1)+1\right)\
.$$ The pullback of the area form on $S^2$ via
$p$ is
$$A'=(d\phi+d\alpha)\wedge\frac{2n}{2n+2k+1-\ell}dh=
\frac{2n}{2n+2k+1-\ell}(d\phi\wedge
dh-\frac{2i}{n}du\wedge d\bar u)\ ,$$ and thus
the pullback of $\omega_{k+n+(1-\ell)/2}$ via $p$
is
$$\frac{k+n+(1-\ell)/2}{2}\cdot
A'=\frac{n}{2}A-i\,du\wedge d\bar
u=\omega_n+\omega_\mathbb C$$ as needed.

A similar proof is obtained for the negative cut
space.
\end{proof}

To complete the cutting, we need to find out what
are the corresponding connections
$\theta^\pm=(\theta_n)_{cut}^\pm$ on
$(P_{k,n})_{cut}^\pm$. Instead of going through
the cutting process of a connection, we proceed
as follows (for the positive cut space).

We know that
$\left((P_{k,n})_{cut}^+,\theta^+\right)$ must be
a spin$^c$ prequantization for
$$((S^2)_{cut}^+,\omega_{cut}^+)=(S^2,\omega_{k+n+(1-\ell)/2})\ .$$
This means that
$$d\theta^+=d\theta_{k+n+(1-\ell)/2}$$
which implies that
$$\theta^+-\theta_{k+n+(1-\ell)/2}=\pi^*\beta$$
for some closed one-form
$\beta\in\Omega^1(S^2;\mathfrak u(1))$. But then
$\beta=df$ is also exact since $S^2$ is simply
connected. We conclude that
$$\theta^+=\theta_{k+n+(1-\ell)/2}+d(\pi^*(f))\ ,$$
thus, the bundle $((P_{k,n})_{cut}^+,\theta^+)$
is gauge equivalent to
$((P_{k,n})_{cut}^+,\theta_{k+n+(1-\ell)/2})$.

A similar argument can be carried out for the
negative cut space. We summarize:\\
\emph{The cutting of $(S^2,\omega_n)$ along the
level set $\Phi^{-1}(\ell/2)$ yields two spin$^c$
prequantizations:
$$(P_{k,-k+(\ell-1)/2}\,,\,\theta_{-k+(\ell-1)/2})\qquad
\mbox{for}\qquad
((S^2)_{cut}^-=S^2,\omega_{-k+(\ell-1)/2})$$ and
$$(P_{(\ell-1)/2,k+n+(1-\ell)/2}\,,\,\theta_{k+n+(1-\ell)/2})\qquad
\mbox{for}\qquad
((S^2)_{cut}^+=S^2,\omega_{k+n+(1-\ell)/2})\ .$$
}

\section{Prequantizing $\mathbb CP^n$}
In this section we construct a spin$^c$
prequantization for the complex projective space
$\mathbb CP^n$ (with the standard Riemannian
structure coming from the K\"ahler structure).
For $n=1$ we have shown that a two form $\omega$
 on $\mathbb CP^1\cong S^2$ is spin$^c$
prequantizable if and only if
$\frac{1}{2\pi}\omega$ is integral (i.e.,
$\int_{\mathbb
CP^1}\frac{1}{2\pi}\omega\in\mathbb Z$ - see
Claim \ref{S^2-integral}). This is not true in
general. We will prove that for an even $n$, if
$(\mathbb CP^n,\omega)$ is spin$^c$
prequantizable then  $\frac{1}{2\pi}\omega$ will
not be integral. This is an important difference
between spin$^c$ prequantization and the
geometric prequantization scheme of Kostant and
Souriau (an excellent reference for geometric
quantization is \cite{GQ}).

From now on, fix a positive integer $n$. Points
in $\mathbb CP^n$ will be written as $[v]$, where
$v\in S^{2n+1}\subset \mathbb C^{n+1}$. The
Fubini-Study form $\omega_{FS}$ on $\mathbb CP^n$
will be normalized (as in \cite[page
261]{Kirillov}) so that $\int_{\mathbb
CP^1}\omega_{FS}=1$ (where $\mathbb CP^1$ is
naturally embedded into $\mathbb CP^n$). We
describe our construction in steps. For
simpliciy, we discuss the non-equivariant case
(where the acting group $G$ is the trivial
group), but our results will apply to the
equivariant case as well. Also, $|\cdot|$ will
denote the determinant
of a matrix.\\

\noindent \textbf{Step 1 - Constructing a
Spin$^c$
structure.}\\
The group $SU(n+1)$ acts transitively on $\mathbb
CP^n$ via $$SU(n+1)\times\mathbb CP^n\to\mathbb
CP^n\qquad,\qquad (A,[v])\mapsto [A\cdot v]\ .$$
Let $p=e_{n+1}\in\mathbb C^{n+1}$ denote the unit
vector $(0,\dots,0,1)$. The stabilizer of $p$
under the $SU(n+1)$-action is $$H=S(U(n)\times
U(1))=\left\{\left(
               \begin{array}{cc}
                 B & 0 \\
                 0 & |B|^{-1} \\
               \end{array}
             \right)
:B\in U(n)\right\}\subset SU(n+1)$$ and so
$\mathbb CP^n\cong SU(n+1)/H$ via $$[A]\mapsto
[A\cdot p]\ .$$ The tangent space $T_{[p]}\mathbb
CP^n$ can be identified  with $\mathbb C^n$ and
then the isotropy representation is given by
$$\sigma\colon H\to U(n)\qquad,\qquad
\sigma\left(
              \begin{array}{cc}
                B & 0 \\
                0 & |B|^{-1} \\
              \end{array}
            \right)=|B|\cdot B\ .
$$
The frame bundle of $\mathbb CP^n$ can then be
described as an associated bundle (using
$U(n)\subset SO(2n)$):
$$SOF(\mathbb CP^n)=SU(n+1)\times_\sigma SO(2n)\ .$$
The map $$f\colon U(n)\to SO(2n)\times
S^1\qquad,\qquad A\mapsto (A,|A|)$$ has a lift
$F\colon U(n)\to Spin^c(2n)$ (see \cite[page
27]{Fr} for an explicit formula for $F$). Using
that, we define
$$P=SU(n+1)\times_{\tilde\sigma}Spin^c(2n)$$
where $\tilde\sigma=F\circ\sigma\colon H\to
Spin^c(2n)$.

Thus we get a spin$^c$ structure $P\to
SOF(\mathbb CP^n)\to\mathbb CP^n$ on the
n-dimensional complex projective space.\\

\noindent \textbf{Step 2 - Constructing a connection on $P\to SOF(\mathbb CP^n)$ .}\\
Let $\theta^R\colon
TSU(n+1)\to\mathfrak{su}(n+1)$ be the
right-invariant Maurer-Cartan form, and define
$$\chi\colon\mathfrak{su}(n+1)\to\mathfrak
h=Lie(H)\qquad,\qquad \left(
                            \begin{array}{cc}
                              A & \ast \\
                              \ast & -tr(A) \\
                            \end{array}
                          \right)\mapsto
\left(
  \begin{array}{cc}
    A & 0 \\
    0 & -tr(A) \\
  \end{array}
\right)\ .$$ Since $\chi$ is an equivariant map
under the adjoint action of $H$, we conclude that
$$\chi\circ\theta^R\colon TSU(n+1)\to\mathfrak
h$$ is a connection 1-form on the (right-)
principal $H$-bundle $$SU(n+1)\to\mathbb
CP^n=SU(n+1)/H\ .$$ This induces a connection
1-form on the principal $Spin^c(2n)$-bundle
$P\to\mathbb CP^n$:
$$\hat\theta\colon TP\to\mathfrak{spin}^c(2n)\ .$$
After composing $\hat\theta$ with the projection
$$\frac{1}{2}det_*\colon\mathfrak{spin}^c(2n)=\mathfrak{spin}(2n)\oplus
\mathfrak u(1)\to\mathfrak u(1)=i\mathbb R$$ We
get a connection 1-form
$\theta=\frac{1}{2}det_*\circ\hat\theta$ on the
principal $U(1)$-bundle $P\to SOF(\mathbb CP^n)$.

In fact, here is an explicit formula for the
connection $\theta$:\\[5pt]
If $\xi=\left(
          \begin{array}{cc}
            A & \ast \\
            \ast & -tr(A) \\
          \end{array}
        \right)\in\mathfrak{su}(n+1)$,\vspace{5pt}
$\zeta\in\mathfrak{spin}^c(2n)$, $\xi^R$ and
$\zeta^L$ are the corresponding vector fields on
$SU(n+1)$ and $Spin^c(2n)$, and
$$q\colon SU(n+1)\times Spin^c(2n)\to P$$
is the quotient map, then a direct computation
gives
$$\theta(q_*(\xi^R+\zeta^L))=\frac{n+1}{2}\cdot
tr(A)+\frac{1}{2}det_*(\zeta)\ .$$ Note that if
$\zeta\in\mathfrak{spin}(2n)$, then
$\theta(q_*(\zeta^L))=0$.\\

\noindent \textbf{Step 3 - Computing the curvature of $\theta$.}\\
Using the formula
$$d\theta(V,W)=V\,\theta(W)-W\,\theta(V)-\theta([V,W])$$
for any two vector fields $V,W$ on $P$, we can
compute the curvature $d\theta$ of the connection
$\theta$. We obtain the following:\\
If $\xi_1,\xi_2\in\mathfrak{su}(n+1)$,
$\zeta_1,\zeta_2\in\mathfrak{spin}^c(2n)$, and
$$[\xi_1,\xi_2]=\left(
                  \begin{array}{cc}
                    X & \ast \\
                    \ast & \ast \\
                  \end{array}
                \right)\in\mathfrak{su}(n+1)$$
then we have
$$d\theta(q_*(\xi_1^R+\zeta_1^L),q_*(\xi_2^R+\zeta_2^L))=
-\frac{n+1}{2}\cdot tr(X)\ .$$ Let $\omega$ be
the real two form on $\mathbb CP^n$ for which
$$d\theta=\pi^*(-i\cdot\omega)\ .$$

In fact
$$\omega=-\frac{n+1}{2}\cdot 2\pi\,\omega_{FS}
$$ where $\omega_{FS}$ is the Fubini-Study
form. To see this, it is enough, by
$SU(n+1)$-invariance of $\omega$ and
$\omega_{FS}$, to show the above equality at one
point (for instance, at $[p]\in\mathbb CP^n$).

Recall that the cohomology class of $\omega_{FS}$
generates the integral cohomology of $\mathbb
CP^n$, i.e., $\int_{\mathbb CP^1}\omega_{FS}=1$.
This immediately implies that our two form
$\omega$ is integral if
and only if $n$ is odd, and we have:\\
$(P,\theta)$ is a spin$^c$ prequantization for
$(\mathbb CP^n,\omega)$.

\begin{remark}
It is not hard to conclude, that a spin$^c$
prequantizable two form $\omega$ on $\mathbb
CP^n$ is integral if and only if $n$ is odd. In
fact, Proposition D.43 in
\cite{Kar}, together with Claim \ref{connection on P_det} imply the following:\\
\emph{For an odd $n$, a two-form $\omega$ on $\mathbb
CP^n$ is spin$^c$ prequantizable if and only if
$\frac{1}{2\pi}\omega$ is integral, i.e.,
$\left[\frac{1}{2\pi}\omega\right]\in\mathbb
Z[\omega_{FS}]$.\\
For an even $n$, a two-form $\omega$ on $\mathbb
CP^n$ is spin$^c$ prequantizable if and only if
$\left[\frac{1}{2\pi}\omega\right]\in\left(\mathbb
Z+\frac{1}{2}\right)[\omega_{FS}]$.}
\end{remark}

%---------------------------BIBLIOGRAPHY---------------------------------

\end{document}